\newtheorem{theorem}{Theorem}
\newtheorem{lemma}{Lemma}
\newtheorem{remark}{Remark}
\newcommand{\cF}{{\mathcal F}}
\newcommand{\cB}{{\mathcal B}}
\newcommand{\cD}{{\mathcal D}}
\newcommand{\hcD}{\hat {\mathcal D}}
\newcommand{\tw}{\widetilde w}
\newcommand{\hw}{\hat w}
\newcommand{\tr}{^\mathsf{T}}
\numberwithin{equation}{section}
\newcommand{\dsp}{\displaystyle}
\newcommand{\cA}{{\mathcal A}}
\newcommand{\hv}{\hat v}
\newcommand{\eps}{\varepsilon}
\newcommand{\mN}{\mathbb{N}}
\newcommand{\mR}{\mathbb{R}}
\newcommand{\mC}{\mathbb{C}}
\newcommand{\cK}{{\mathcal K}}
\newcommand{\D}{{\mathcal T}}
\date{\empty}
\begin{document}


\title{Null-controllability of linear hyperbolic systems in one dimensional space}
\author{Jean-Michel Coron\footnote{Sorbonne Universit\'{e}, Universit\'{e},
	Paris-Diderot SPC, CNRS, INRIA,
	Laboratoire Jacques-Louis Lions, \'{e}quipe Cage, Paris, France, coron\@ann.jussieu.fr} and Hoai-Minh Nguyen\footnote{Department of Mathematics, EPFL SB CAMA, 	Station 8 CH-1015 Lausanne, Switzerland, hoai-minh.nguyen\@epfl.ch}}

\maketitle

\begin{abstract} This paper is devoted to the controllability of a general linear hyperbolic system
 in one space dimension using boundary controls on one side. Under precise and generic assumptions on the boundary conditions on the other side, we previously  established the optimal time for the null and the exact controllability for this system for a generic source term. In this work, we prove the null-controllability for any time greater than the optimal time and for any source term.  Similar results for the exact controllability are also discussed.
\end{abstract}

\noindent\textbf{Keywords.} Null-controllability; hyperbolic systems; backstepping; Hilbert uniqueness method; compactness.

\section{Introduction and statement of the main result}
Linear hyperbolic systems in one dimensional space are frequently used
in modeling of many systems such as traffic flow, heat exchangers,  and fluids in open channels. The
stability and boundary stabilization of these hyperbolic systems
have been studied intensively in the literature, see,  e.g.,   \cite{BastinCoron} and the references therein. In this paper, we are concerned about the optimal time for the null-controllability using boundary controls on one side.  More precisely, we consider the system
\begin{equation}\label{Sys-1}
\partial_t w (t, x) =  \Sigma(x) \partial_x w (t, x) + C(x) w(t, x)
\mbox{ for } (t, x)  \in \mR_+ \times (0, 1).
\end{equation}
Here $w = (w_1, \cdots, w_n)\tr : \mR_+ \times (0, 1) \to \mR^n$ ($n \ge 2$), $\Sigma$ and $C$ are   $(n \times n)$ real matrix-valued functions defined in $[0,1]$. As usual, see e.g.  \cite{CoronNg19}, we assume that, may be after a linear change of variables $w\rightarrow R(x)w$,  $\Sigma(x)$ is of the form
\begin{equation}\label{form-A}
\Sigma(x) = \mbox{diag} \big(- \lambda_1(x), \cdots, - \lambda_{k}(x), \lambda_{k+1}(x), \cdots,  \lambda_{n}(x) \big),
\end{equation}
where
\begin{equation}\label{relation-lambda}
-\lambda_{1}(x) < \cdots <  - \lambda_{k} (x)< 0  < \lambda_{k+1}(x) < \cdots < \lambda_{k+m}(x).
\end{equation}
Throughout the paper, we assume that
\begin{equation}\label{cond-lambda}
\mbox{$\lambda_i$  is Lipschitz on $[0, 1]$  for $1 \le i \le n \,  (= k + m)$}
\end{equation}
and
\begin{equation}\label{cond-C}
C \in [L^\infty(0, 1)]^{n \times n}.
\end{equation}
We are interested in the following type of boundary conditions and boundary controls. The boundary conditions at $x = 0$ is given by
\begin{equation}\label{bdry:x=0-w}
w_- (t, 0) = B w_+(t, 0)  \mbox{ for } t \ge 0,
\end{equation}
where $w_- = (w_1, \cdots, w_k)\tr $ and $w_+ = (w_{k+1}, \cdots, w_{k+m})\tr$,
for some given $(k \times m)$ real,  {\it constant} matrix $B$, and the boundary controls  at $x = 1$ is
\begin{equation}\label{control:x=1-w}
w_+ (t, 1) = W(t) \mbox{ for } t \ge 0,
\end{equation}
where $W = (W_{k +1}, \dots, W_{k + m})\tr$ are controls.

Let us recall that the control system \eqref{Sys-1}, \eqref{bdry:x=0-w}, and \eqref{control:x=1-w} is null-controllable (resp. exactly controllable) at the time $T>0$ if, for every initial data $w_0: (0,1)\to \mathbb{R}^n$ in $[L^2(0,1)]^n$ (resp. for every initial data $w_0: (0,1 )\to \mathbb{R}^n$ in $[L^2(0,1)]^n$  and for every (final) state $w_T:  (0,1 )\to \mathbb{R}^n$  in $[L^2(0,1)]^n$), there is a control $W :(0,T)\to \mathbb{R}^m$ in $[L^2(0,T)]^m$ such that the solution
of \eqref{Sys-1}, \eqref{bdry:x=0-w}, and \eqref{control:x=1-w} satisfying $w(0,x)=w_0(x)$ vanishes (resp. reaches $w_T$) at the time $T$: $w(T,x)=0$ (resp. $w(T, x)= w_T(x)$). Throughout this paper, we consider broad solutions in $L^2$ with respect to $t$ and $x$ for an initial data in $L^2(0, 1)$ as in \cite[Definition 3.1]{CoronNg19}. The well-posedness for broad solutions was given in \cite[Lemma 3.2]{CoronNg19}.  In fact, in \cite[Definition 3.1]{CoronNg19} and \cite[Lemma 3.2]{CoronNg19}, bounded broad solutions with respect to $t$ and $x$ for an initial data in $[L^\infty(0, 1)]^n$ are considered, nevertheless, the extension for $L^2$-setting is quite straightforward (see also \cite{Bressan00}).

\medskip
Set
\begin{equation}\label{def-tau}
\tau_i :=  \int_{0}^1 \frac{1}{\lambda_i(\xi)}  \, d \xi  \mbox{ for } 1 \le i \le n,
\end{equation}
and
\begin{equation}\label{def-Top}
T_{opt} := \left\{ \begin{array}{l}  \dsp \max \big\{ \tau_1 + \tau_{m+1}, \dots, \tau_k + \tau_{m+k}, \tau_{k+1} \big\}  \mbox{ if } m \ge k, \\[6pt]
\dsp \max \big\{ \tau_{k+1-m} + \tau_{k+1}, \tau_{k+2-m} + \tau_{k+2},  \dots,
\tau_{k} + \tau_{k+m} \big\}   \mbox{ if } m < k.
\end{array} \right.
\end{equation}

In this paper, we are mainly concerned about the optimal time for the null controllability of \eqref{Sys-1}, \eqref{bdry:x=0-w}, and \eqref{control:x=1-w} for $k \ge m \ge 1$. The null-controllability  was known from \cite{Russell78} for the time $\tau_k + \tau_{k+1}$ without any assumption on $B$ (see also \cite{AM16, CHO17, CVKB13, MVK13} for feedback controls using backstepping). In our previous work  \cite{CoronNg19}, we established the null controllability of \eqref{Sys-1}, \eqref{bdry:x=0-w}, and \eqref{control:x=1-w} at the {\it optimal} time $T_{opt}$ with $B \in \cB$ defined in \eqref{def-B} below, for a generic $C$, i.e. for $\gamma C$ with $\gamma \in \mR$ outside a discrete subset of $\gamma \in \mR$. When $C \equiv 0$, we also show that
there exists a {\it linear time independent feedback} which yields the null-controllability at the time $T_{opt}$.  Similar results for the exact controllability at $T_{opt}$ were also established there (see \Cref{sect-exact} for a discussion). The optimality of $T_{opt}$  even for $C \equiv 0$ was also discussed in \cite{CoronNg19}. It is worth noting that there are choices of  constant $\Sigma$, $B$, and $C$ when  $m=2$ and $k \ge 2$ so that the system is not null-controllable at the time $T_{opt}$ \cite[part 2 of Theorem 1]{CoronNg19} (see also \cite[pages 559-561]{Russell78}). It is easy to see  that $\cB$ is an open subset of  the set of (real) $k\times m$ matrices  and the Hausdorff dimension of its complement is $\min\{k, m-1 \}$.

\medskip

In this work, we prove the null-controllability of  \eqref{Sys-1}, \eqref{bdry:x=0-w}, and \eqref{control:x=1-w}  for any time greater than $T_{opt}$ and for $m \ge k \ge 1$ without the generic requirement.  Here is the main result of our paper:

\begin{theorem}\label{thm1} Let $k \ge m \ge 1$,  and set
\begin{equation}\label{def-B}
\cB: = \big\{B \in \mR^{k \times m}; \mbox{ such that  \eqref{cond-B-1} holds for  $1 \le i \le  \min\{k, m- 1\}$} \big\},
\end{equation}
where
\begin{equation}\label{cond-B-1}
\mbox{ the $i \times i$  matrix formed from the last $i$ columns and  rows of $B$  is invertible.}
\end{equation}
Assume  that  $B \in \cB$. The control system \eqref{Sys-1}, \eqref{bdry:x=0-w}, and \eqref{control:x=1-w} is null-controllable at any time $T$ greater than $T_{opt}$.
\end{theorem}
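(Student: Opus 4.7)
I would establish null-controllability at time $T > T_{opt}$ via the Hilbert uniqueness method (HUM), reducing the claim to an observability inequality for the backward adjoint of \eqref{Sys-1}, \eqref{bdry:x=0-w}, \eqref{control:x=1-w}, and then prove that inequality by a compactness--uniqueness argument. The extra time $T - T_{opt} > 0$ is what replaces the genericity hypothesis on $C$ used in \cite{CoronNg19} and lets one handle an arbitrary $C \in [L^\infty(0,1)]^{n \times n}$.

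\textbf{Weak observability.} The first step is to write down the adjoint (a backward hyperbolic system with transposed characteristic structure, zeroth-order term involving $C\tr$, boundary condition at $x = 0$ involving $B\tr$, and homogeneous condition on $\phi_+$ at $x = 1$), and translate null-controllability at time $T$ into the observability inequality
$\|\phi(0,\cdot)\|_{L^2(0,1)}^2 \le K \int_0^T |\phi_+(t,1)|^2 \, dt$. The second step is to split $\phi = \phi^0 + \phi^C$, where $\phi^0$ solves the $C \equiv 0$ version of the adjoint with the same terminal data and $\phi^C$ is given by Duhamel's formula. For $\phi^0$ an explicit observability inequality at the optimal time $T_{opt}$ is obtained from the method of characteristics together with the invertibility hypothesis \eqref{cond-B-1}, exactly as in the $C \equiv 0$ analysis of \cite{CoronNg19}; the Duhamel correction $\phi^C$ is a zeroth-order compact perturbation (a time-integral against the $L^\infty$ matrix $C\tr$). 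Combining these ingredients with the margin provided by $T > T_{opt}$ yields a weakened observability
\[
\|\phi(0,\cdot)\|_{L^2(0,1)}^{\,2} \le K \int_0^T |\phi_+(t,1)|^2 \, dt + K \|\phi\|_{\mathcal{Z}}^{\,2},
\]
for an auxiliary norm $\|\cdot\|_{\mathcal{Z}}$ capturing only lower-order spacetime behaviour of $\phi$.

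\textbf{Unique continuation and conclusion.} The third step is unique continuation: if $\phi$ solves the adjoint with $\phi_+(\cdot,1) \equiv 0$ on $(0,T)$, then $\phi \equiv 0$. This is obtained by propagating the zero boundary trace at $x = 1$ backward along characteristics, using the invertibility \eqref{cond-B-1} at each reflection from $x = 0$ to recover the missing components, the strict inequality $T > T_{opt}$ ensuring that the iteration sweeps out the whole spatial interval. A standard compactness--uniqueness contradiction then eliminates the $\|\phi\|_{\mathcal{Z}}^{\,2}$ term: a hypothetical sequence $\phi_n$ with $\|\phi_n(0,\cdot)\|_{L^2} = 1$ and $\int_0^T |\phi_{n,+}(t,1)|^2\,dt \to 0$ yields, via compactness in $\mathcal{Z}$ (with time-regularity extracted from the hyperbolic equation itself in the spirit of Aubin--Lions), a nontrivial limit $\phi_\infty$ solving the adjoint with zero observation, contradicting the unique continuation.

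\textbf{Main obstacle.} The delicate point is arranging the compactness that kills the $\|\phi\|_{\mathcal{Z}}^{\,2}$ remainder: since $C$ is only in $L^\infty$ there is no direct smoothing, and the compactness must be extracted from the spacetime structure of the Duhamel integral combined with the transport dynamics, all within the $L^2$ broad-solution framework of \cite{CoronNg19}. A secondary subtlety is that the constant $K = K(T)$ in the weakened observability will in general blow up as $T \downarrow T_{opt}$; tracking this dependence and quantitatively converting the margin $T - T_{opt}$ into a usable estimate is where the bulk of the technical work would lie.
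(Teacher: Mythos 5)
Your skeleton (HUM, observability inequality, compactness--uniqueness) is the right family of argument, but the proposal has a genuine gap at its load-bearing step, the unique continuation claim. You assert that $\phi_+(\cdot,1)\equiv 0$ on $(0,T)$ forces $\phi\equiv 0$ ``by propagating the zero boundary trace backward along characteristics, using the invertibility \eqref{cond-B-1} at each reflection from $x=0$ to recover the missing components.'' For $k\ge m$ the hypothesis \eqref{cond-B-1} only requires the trailing $i\times i$ minors of $B$ to be invertible for $i\le m-1$, not $i=m$; so at each reflection at $x=0$ one can recover at most $m-1$ of the $m$ components of $\phi_+(t,0)$ from $\phi_-(t,0)$, and one direction is always missing. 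The missing component is tied to the rest only through the distributed zeroth-order coupling, so the characteristic cascade does not close for a general solution -- if it did, the genericity issue in \cite{CoronNg19} would never have arisen, and indeed the paper records counterexamples to controllability at $T_{opt}$ itself. The paper circumvents this in two ways you omit. First, it performs the backstepping reduction $u=w-\int_0^x K(x,y)w(t,y)\,dy$, replacing the distributed term $C(x)w(t,x)$ by $S(x)u(t,0)$ with $S_{++}$ strictly triangular; in the adjoint this turns the coupling into a spatial integral sitting in the boundary condition at $x=0$, i.e.\ (via the characteristic representation) a Volterra operator in time with bounded kernel, which is where the compactness actually comes from. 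Second, it never proves unique continuation for general solutions: the compactness lemma (\Cref{lem-compact}) shows the space $Y_T$ of attainable limits is finite dimensional and nonincreasing in $T$, so $\dim Y_{T_1}=\dim Y_{T_2}$ for some $T_{opt}<T_1<T_2<T$, which makes $Y_{T_1}$ invariant under the generator $\Sigma\partial_x+\Sigma'$ and produces an eigenfunction $v=e^{\lambda t}V(x)$; only for such time-exponential solutions does the strict triangularity of $S_{++}$ let the cascade $v_{k+1}\equiv 0$, then $v_{k+2}\equiv 0$, \dots{} close. This spectral reduction (in the spirit of \cite{BLR92,Rosier97}) is also the precise mechanism by which the margin $T>T_{opt}$ is used; no quantitative tracking of a blowing-up constant $K(T)$ is needed.

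The secondary assertion, that the Duhamel correction $\phi^C$ is a ``compact perturbation,'' is likewise unsupported as stated: transport semigroups do not smooth, so the map from terminal data to $C\tr\phi$ on $(0,T)\times(0,1)$ is not compact, and your decomposition $\phi=\phi^0+\phi^C$ recouples the two pieces through the boundary condition at $x=0$ in a way that does not isolate a compact remainder. You correctly flag this as the main obstacle, but the resolution is not ``more careful velocity averaging''; it is the backstepping transformation, which relocates the coupling to the boundary condition where it becomes a genuinely compact Volterra term. Without that step, and without the finite-dimensional spectral reduction, the proposed compactness--uniqueness scheme cannot be completed.
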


To our knowledge, the null-controllability result of \Cref{thm1} in the case $m < k$ with general $m$ and $k$ is new. The sharpest  known result on the time to obtain the null-controllability is $\tau_k + \tau_{k+1}$.   When $m = k$, \Cref{thm1} can be derived from the exact controllable result in \cite{HO19} under the additional assumption that \eqref{cond-B-1} holds for $i =k$ (see \Cref{sect-exact} for a discussion).
The starting point  of our analysis is the backstepping approach.
 More precisely, as in \cite{CoronNg19}, we make the following change of variables
$$
u(t, x) = w(t, x) - \int_0^x K(x, y) w(t, y) \, d y.
$$
Here the kernel  $K: \D = \big\{(x, y) \in (0, 1)^2; 0  <  y <  x \big\}
 \to \mR^{n}$ is chosen such that $u$ satisfies
 \begin{equation}\label{eq-u}
\partial_t u (t, x) =  \Sigma(x) \partial_x u (t, x) + S(x) u(t, 0)  \mbox{ for } (t, x)  \in  (0, T) \times (0, 1),
\end{equation}
where $S \in [L^\infty(0, 1)]^{n \times n}$ has the structure
\begin{equation}\label{cond-S}
S = \left( \begin{array}{cl}
0_{k, k} & S_{-+} \\[6pt]
0_{m, k} & S_{++}
\end{array}\right),
\end{equation}
with
$$
(S_{++})_{pq} = 0  \mbox{ for } 1 \le q \le p,
$$
$S_{-+} \in [L^\infty(0, 1)]^{k \times m}$ and $S_{++} \in [L^\infty(0, 1)]^{k \times k}$.  Here and in what follows, $0_{i, j}$  denotes the zero matrix  of size $i \times j$ for $i, \, j \in \mN$, and $M_{pq}$ denotes the $(p, q)$-component of a matrix $M$.
It is shown in \cite[Proposition 3.1]{CoronNg19} that  the null-controllability of \eqref{Sys-1}, \eqref{bdry:x=0-w}, and \eqref{control:x=1-w} at the time $T$ can be derived from the null-controllability at the time $T$ of \eqref{eq-u} equipped  the boundary condition at $x=0$
\begin{equation}\label{bdry-u}
u_- (t, 0)  = B u_+(t, 0) \mbox{ for } t \ge 0,
\end{equation}
and the boundary controls  at $x = 1$
\begin{equation}\label{control-u}
u_+ = U(t) \mbox{ for } t \ge 0 \mbox{ where $U$ is the control.}
\end{equation}
To establish the null-controllability for $u$, we use the Hilbert uniqueness method which involves crucially a compactness result type in  \Cref{lem-compact} with its roots in \cite{CoronNg19}.

The backstepping approach for the control of partial differential equations was pioneered by  Miroslav Krstic and his coauthors (see \cite{Krstic08} for a concise introduction).  The use of backstepping method to obtain the null-controllability for hyperbolic systems in one dimension  was initiated  in \cite{CVKB13} for the case $m = k = 1$. This approach has been  developed later on for more general  $m$ and $k$ in \cite{AM16, CHO17,MVK13}. The backstepping method is now frequently used for various control problems modeling by partial differential equations in one dimension. For example, it has been also used to stabilize the  wave equation  \cite{KGBS08, SK09, SCK10}, the parabolic equations in \cite{SK04,SK05},  nonlinear parabolic equations  \cite{Vazquez08},  and to  obtain the null-controllability of the heat equation \cite{Coron-Nguyen17}. The standard backstepping approach relies on the Volterra transform of the second kind. It is worth noting that, in some situations, more general transformations have to be  considered as for Korteweg-de Vries equations  \cite{CoronC13}, Kuramoto--Sivashinsky equations \cite{Coron15}, Schr\"{o}dinger's equation \cite{CoronGM18}, and hyperbolic equations with internal controls \cite{2019-Zhang-preprint}.

\medskip
The rest of the paper is organized as follows. In \Cref{sect-null}, we establish \Cref{thm1}. The exact controllability is discussed in \Cref{sect-exact}.

\section{Optimal time for the null-controllability}\label{sect-null}

In this section, we study the null-controllability of \eqref{eq-u} and \eqref{bdry-u} under the control law \eqref{control-u}. The main result of this section, which implies \Cref{thm1} by \cite[Proposition 3.5]{CoronNg19}, is:

\begin{theorem}\label{thm2} Let $k \ge m \ge 1$. System \eqref{eq-u} and \eqref{bdry-u} under the control law \eqref{control-u} is null-controllable at any time larger than $T_{opt}$.
\end{theorem}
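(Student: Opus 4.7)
The plan is to reduce null-controllability at time $T>T_{opt}$ to an observability inequality for the adjoint of \eqref{eq-u}--\eqref{bdry-u}--\eqref{control-u} via the Hilbert uniqueness method, and to establish that inequality through a compactness-uniqueness argument based on \Cref{lem-compact}.

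First I would derive the adjoint backward problem. Because the source $S(x) u(t,0)$ is local in time but involves only the trace at $x=0$, its dual contributes a term of the form $\int_{0}^{1}S(x)\tr v(t,x)\,dx$ supported on $x=0$, which is naturally absorbed into a modified boundary/transmission condition on $v$ at $x=0$. Dualizing \eqref{bdry-u} yields the remaining part of the boundary relation at $x=0$, while the control on $u_{+}$ at $x=1$ translates into homogeneous conditions on $v_{-}(\cdot,1)$ and an observation on $v_{+}(\cdot,1)$. A standard HUM computation then identifies null-controllability at time $T$ with the observability inequality
\begin{equation*}
\|v(0,\cdot)\|_{[L^{2}(0,1)]^{n}}^{2}\,\le\,C\int_{0}^{T}|v_{+}(t,1)|^{2}\,dt
\end{equation*}
for every broad $L^{2}$ adjoint solution $v$.

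I would prove this inequality by contradiction. Assuming it fails, take a normalized sequence of adjoint solutions $(v_{n})$ with $\|v_{n}(0,\cdot)\|_{L^{2}}=1$ and $\int_{0}^{T}|v_{n,+}(t,1)|^{2}\,dt\to 0$. The nonlocal-in-space coupling in the adjoint is precisely where \Cref{lem-compact} enters: it supplies the compactness needed to extract a subsequence converging to a limit $v$ which is still an adjoint solution, satisfies $v_{+}(\cdot,1)\equiv 0$ on $(0,T)$, and retains $\|v(0,\cdot)\|_{L^{2}}=1$. It therefore suffices to show that any such $v$ must vanish identically; once this is done, HUM furnishes the control.

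This uniqueness step is the main obstacle, and it is where both the hypothesis $B\in\cB$ and the sharp time condition $T>T_{opt}$ are used. The strategy is a finite induction over the components of $v_{+}$, exploiting the strict upper-triangular structure of $S_{++}$, which decouples one component of $v_{+}$ at a time into an essentially scalar transport equation. Vanishing of the trace at $x=1$ over a time window of length at least $\tau_{k+i}$ forces the corresponding component to vanish on the associated characteristic triangle, so that its trace at $x=0$ vanishes as well. The invertibility of the trailing $i\times i$ principal submatrices of $B$ encoded in \eqref{cond-B-1} then lets me transfer this information across the boundary relation $u_{-}(\cdot,0)=B u_{+}(\cdot,0)$ to the corresponding components of $v_{-}$; those are carried back to $x=1$ along the rightward characteristics in time at most $\tau_{k+1-i}$. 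The inequality $T>T_{opt}=\max_{1\le i\le m}(\tau_{k+1-i}+\tau_{k+i})$ is precisely what is needed for each such round-trip to fit inside $(0,T)$, so that the induction propagates from the last component of $v_{+}$ through all earlier ones and forces $v\equiv 0$. Combined with \cite[Proposition~3.5]{CoronNg19}, this also yields \Cref{thm1}.
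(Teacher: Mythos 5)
Your overall frame (HUM reduction to an observability inequality, contradiction with a normalized sequence, and \Cref{lem-compact} to extract a strong limit) matches the paper's first half. The gap is in the sentence ``It therefore suffices to show that any such $v$ must vanish identically,'' followed by a direct characteristic induction on the finite window $(0,T)$. That unique continuation statement for \emph{arbitrary} adjoint solutions at times just above $T_{opt}$ is precisely the hard point, and your induction does not close for $m\ge 2$. Concretely: the interior equation for $v$ is already diagonal, so all coupling sits in the nonlocal boundary relation $\Sigma_+(0)v_+(t,0)=-B\tr\Sigma_-(0)v_-(t,0)+\int_0^1 S_{-+}\tr v_-+S_{++}\tr v_+\,dx$. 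The value of $V_+=v(-T,\cdot)_+$ is encoded by the traces $v_{k+j}(\cdot,0)$ on the early windows $(-T,-T+\tau_{k+j})$, and on those windows the components $v_j(\cdot,0)$ for $j>k+1-m$ are \emph{not} forced to vanish by the characteristics (one only gets $v_j(t,0)=0$ for $t<-\tau_j$, and $\tau_j+\tau_{k+1}$ can exceed $T_{opt}$ when $m\ge2$; note also that your pairing $\tau_{k+1-i}+\tau_{k+i}$ is not the definition \eqref{def-Top}, whose pairs are $\tau_{k+j-m}+\tau_{k+j}$). Solving for these unknown traces via \eqref{cond-B-1} reintroduces the $v_{k+j}(\cdot,0)$ through genuinely Fredholm (not Volterra) couplings: one lands exactly on the fixed-point relation $V=\cK V$ of \Cref{lem-compact}, whose kernel may a priori be nontrivial. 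This is why the predecessor paper needed a genericity assumption on $C$; your argument as written would only recover the classical time $\tau_k+\tau_{k+1}$ (it does work when $m=1$, where $T_{opt}=\tau_k+\tau_{k+1}$).

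The missing idea is the compactness--uniqueness scheme in the spirit of Bardos--Lebeau--Rauch: from \eqref{thm2-claim2} the space $Y_T$ of invisible limits is finite-dimensional with a bound independent of $T$, and $Y_{T_2}\subset Y_{T_1}$ for $T_{opt}<T_1<T_2$, so one can pick $T_1<T_2$ with $\dim Y_{T_1}=\dim Y_{T_2}\ne 0$ and show that $Y_{T_1}$ is invariant under $\cA=\Sigma\partial_x+\Sigma'$. This produces an eigenfunction, i.e.\ a solution of the form $v(t,x)=e^{\lambda t}V(x)$ defined for \emph{all} $t<0$. For such time-harmonic solutions your characteristic induction does work, because vanishing at one time forces vanishing at all times and one may propagate backward over an arbitrarily long interval ($v_-\equiv0$, then $v_{k+1}\equiv0$ by the zero first row of $S_{++}\tr$, then $v_{k+2}\equiv0$, etc., using total time $\tau_{k+1}+\cdots+\tau_{k+m}$ plus more, which need not fit inside $(0,T)$). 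Without this reduction to eigenfunctions your proof is incomplete at its central step.
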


The rest of this section contains two sections. In the first section, we present some lemmas used in the proof of \Cref{thm2}. The proof of \Cref{thm2} is given in the second section.

\subsection{Some useful lemmas} Fix $T>0$. Define
$$
\begin{array}{cccc}
\cF_T: &  [L^2(0, T)]^m &  \to & [L^2(0, 1)]^n \\[6pt]
& \cF_T (U) &  \mapsto & u(T, \cdot),
\end{array}
$$
where $u(\cdot, \cdot)$ is the solution of the system \eqref{eq-u}-\eqref{control-u} with $u(t = 0, \cdot) = 0$.

\begin{lemma}\label{lem-FT} We have, for $v \in [L^2(0, 1)]^n$,
\begin{equation*}
\cF_T^*(v) = \Sigma_+ (1) v_+(\cdot, 1) \mbox{ in } (0, T),
\end{equation*}
where $v(\cdot, \cdot)$ is the unique solution of the system
\begin{equation}\label{eq-v}
\partial_t v (t, x) =  \Sigma(x) \partial_x v (t, x) + \Sigma'(x)  v(t, x)
\mbox{ for } (t, x)  \in (-\infty, T) \times (0, 1),
\end{equation}
with, for $t < T$,
\begin{equation}\label{bdry-v0}
v_-(t, 1)  = 0,
\end{equation}
\begin{equation}\label{bdry-v}
\Sigma_+ (0) v_+(t, 0) = - B\tr \Sigma_- (0) v_- (t, 0) + \int_0^1  S_{-+}\tr(x) v_- (t, x) + S_{++}\tr(x) v_+(t, x) \, dx,
\end{equation}
and
\begin{equation}\label{initial-v}
v(t = T, \cdot) = v \mbox{ in } (0, 1).
\end{equation}
\end{lemma}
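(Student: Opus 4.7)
The plan is to prove the lemma by the standard duality argument: for arbitrary $U \in [L^2(0,T)]^m$ and $v \in [L^2(0,1)]^n$, I compute $\langle \cF_T(U), v \rangle_{[L^2(0,1)]^n}$ and rearrange it via integration by parts until the only surviving term is $\int_0^T U\tr(t)\, \Sigma_+(1)\, v_+(t,1)\, dt$. The adjoint dynamics \eqref{eq-v}--\eqref{initial-v}, including the nonlocal inflow condition \eqref{bdry-v}, will be \emph{forced} on us by the requirement that all other interior and boundary contributions cancel.

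A preliminary step is to establish existence and uniqueness of a broad solution $v$ to the adjoint system. After the time reversal $\tilde v(t,x) := v(T-t,x)$, the system becomes forward in time; the characteristics of the $\tilde v_-$-components travel from $x=1$ toward $x=0$ (matching the Dirichlet condition $v_-(\cdot,1)=0$) and those of the $\tilde v_+$-components travel from $x=0$ toward $x=1$ (matching the implicit inflow condition \eqref{bdry-v}). Since the integral term appearing in \eqref{bdry-v} is a bounded lower-order perturbation, a Picard iteration along characteristics gives well-posedness in the broad-solution sense, in the spirit of \cite[Lemma 3.2]{CoronNg19}.

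Working first with smooth, compactly supported $U$ and smooth $v$ so that $u$ and $v$ are regular enough to differentiate classically, I multiply \eqref{eq-u} by $v$ and integrate over $(0,T) \times (0,1)$. Using $u(0,\cdot)=0$ and $v(T,\cdot)=v$, then integrating by parts in $t$ and in $x$ (noting that $\Sigma$ is diagonal, hence symmetric), the resulting identity expresses $\int_0^1 v\tr u(T,\cdot)\,dx$ as the sum of: (i) an interior integral of $u$ against $\partial_t v - \Sigma \partial_x v - \Sigma' v$; (ii) the boundary flux $\int_0^T [v\tr \Sigma u]_{x=0}^{x=1}\, dt$; and (iii) the source contribution $\int_0^T u(t,0)\tr \bigl(\int_0^1 S\tr(x) v(t,x)\,dx\bigr)\,dt$, where by the block structure \eqref{cond-S} only the $u_+(t,0)$-block of the integrand survives. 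Forcing (i) to vanish uniquely produces the PDE \eqref{eq-v}.

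For the boundary pieces, the term at $x=1$ collapses to $v_+\tr(t,1)\,\Sigma_+(1)\,U(t)$ thanks to $v_-(t,1)=0$ and $u_+(t,1)=U(t)$, while the term at $x=0$, after substituting $u_-(t,0)=B u_+(t,0)$, becomes $-u_+\tr(t,0)\bigl[\Sigma_+(0) v_+(t,0) + B\tr \Sigma_-(0) v_-(t,0)\bigr]$, which combines with (iii) into $u_+\tr(t,0)$ multiplied by exactly the bracket that vanishes under \eqref{bdry-v}. Only the $x=1$ piece then remains, yielding $\langle v, \cF_T(U)\rangle = \int_0^T U\tr(t) \Sigma_+(1) v_+(t,1)\,dt$ as claimed. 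I then extend to arbitrary $U\in [L^2(0,T)]^m$ and $v\in [L^2(0,1)]^n$ by density, using continuous dependence of broad solutions on data \cite[Lemma 3.2]{CoronNg19}. The main technical obstacle is giving meaning to the trace $v_+(\cdot,1)\in L^2(0,T)$ at the $L^2$ level and to the integration by parts itself; this is the usual hidden-regularity issue for one-dimensional hyperbolic systems and is handled by following information along characteristics exactly as for the forward system.
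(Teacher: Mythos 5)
Your proposal is correct and takes essentially the same route as the paper: the same duality computation, expanding $\langle u(T,\cdot),v(T,\cdot)\rangle$ through the time derivative and an integration by parts in $x$, with the boundary conditions \eqref{bdry-u}, \eqref{control-u}, \eqref{bdry-v0} and the nonlocal condition \eqref{bdry-v} cancelling everything except the flux $\int_0^T \langle \Sigma_+ v_+, u_+\rangle(t,1)\,dt$. The additional attention you give to well-posedness of the adjoint system and to the density/regularity step is a sound supplement to the paper's more formal presentation, not a different argument.
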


Throughout this paper, $\langle \cdot, \cdot \rangle$ denotes the Euclidean scalar product in the Euclidean space and $\langle \cdot, \cdot \rangle_{L^2(a, b)}$ denotes the scalar product in $L^2(a, b)$ for  $a < b$.

\begin{proof} We have
\begin{align*}
\langle U, \cF_T^*v \rangle_{L^2(0, T)}
 = &  \langle \cF_T U, v \rangle_{L^2(0, 1)} = \langle u(T, \cdot), v(T, \cdot) \rangle_{L^2(0, 1)} \\[6pt]
= &  \int_0^T \partial_t  \langle u(t, \cdot), v(t, \cdot) \rangle_{L^2(0, 1)} \, dt \\[6pt]
= &  \int_0^T  \langle \partial_t u(t, \cdot), v(t, \cdot) \rangle_{L^2(0, 1)} + \langle u(t, \cdot), \partial_t v(t, \cdot) \rangle_{L^2(0, 1)}  \, dt  \\[6pt]
= &  \int_0^T \int_0^1 \langle \Sigma(x) \partial_x u (t, x) + S(x)  u(t, 0) , v(t, x) \rangle  + \langle  u(t, \cdot), \partial_t v(t, \cdot) \rangle  \, d x \, dt \mbox{ by } \eqref{eq-u}.
\end{align*}
An integration by parts yields
\begin{multline*}
\int_0^T \int_0^1  \langle \Sigma(x) \partial_x u (t, x), v(t, x) \rangle \, dx \, dt
 = \int_0^T \int_0^1 - \langle \Sigma'(x) v(t, x) + \Sigma(x) \partial_x v (t, x), u(t, x) \rangle \, d t \\[6pt]
+ \int_0^T \langle u(t, 1), \Sigma (1) v(t, 1) \rangle - \int_0^T \langle u(t, 0), \Sigma (0) v(t, 0) \rangle \, dt.
\end{multline*}
Using the conditions on $u$ at $x = 0$ and $x =1$ (see \eqref{bdry-u} and \eqref{control-u}), and \eqref{bdry-v0}, we have
\begin{multline*}
\int_0^T \langle u(t, 1), \Sigma (1) v(t, 1) \rangle - \int_0^T \langle u(t, 0), \Sigma (0) v(t, 0) \rangle \, dt
= \int_0^T  \langle \Sigma_+ v_+, u_+ \rangle (t, 1) \, dt  \\[6pt]- \int_0^T \langle B\tr \Sigma_- v_- + \Sigma_+ v_+, u_+ \rangle (t,0) \, dt.
\end{multline*}
We then obtain
\begin{multline*}
\langle U, \cF_T^*v \rangle  =
 \int_0^T \int_0^1 \langle S(x)  u(t, 0) , v(t, x) \rangle
 + \int_0^T  \langle \Sigma_+ v_+, u_+ \rangle (t, 1) \, dt \\[6pt]
 - \int_0^T \langle B\tr \Sigma_- v_- + \Sigma_+ v_+, u_+ \rangle (t,0) \, dt.
\end{multline*}
Using the boundary condition \eqref{bdry-v}, we obtain
\begin{equation*}
\langle U, \cF_T^*v \rangle_{L^2(0, T)}  =  \int_0^T  \langle \Sigma_+ v_+, u_+ \rangle (t, 1) \, dt,
\end{equation*}
which  implies the conclusion.
\end{proof}

Similarly, we have the following result whose proof is omitted.

\begin{lemma}\label{lem-ST} Let $T>0$ and $u_0 \in [L^2(0, 1)]^n$. Assume that $u$ is the unique solution of \eqref{eq-u} and \eqref{bdry-u} with $u(t=0, \cdot) = u_0$ and $u_+(\cdot, 0) = 0$ for $t>0$. Then, for $v \in L^2(0, 1)$, we have
\begin{equation*}
\int_0^1 \langle u(T, x),  v(x) \rangle  \, dx  = \int_0^1 \langle u_0 (x), v(0, x ) \rangle  \, dx,
\end{equation*}
where $v(\cdot, \cdot)$ is the solution of \eqref{eq-v}-\eqref{initial-v}.
\end{lemma}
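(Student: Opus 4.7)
The plan is to mirror the proof of \Cref{lem-FT} with the roles of the initial/final data and of the boundary control interchanged. Write
$$
\int_0^1 \langle u(T,x), v(x)\rangle\, dx - \int_0^1 \langle u_0(x), v(0,x)\rangle\, dx \;=\; \int_0^T \partial_t \int_0^1 \langle u(t,x), v(t,x)\rangle\, dx\, dt,
$$
substitute the PDEs \eqref{eq-u} and \eqref{eq-v}, and carry out the same spatial integration by parts as in \Cref{lem-FT}. Because $\Sigma$ is diagonal, the interior term involving $\Sigma'$ produced by the by-parts step cancels exactly with the one already present in \eqref{eq-v}, leaving
$$
\partial_t \int_0^1 \langle u, v\rangle\, dx \;=\; \int_0^1 \langle S(x) u(t,0), v(t,x)\rangle\, dx + \langle \Sigma(1) u(t,1), v(t,1)\rangle - \langle \Sigma(0) u(t,0), v(t,0)\rangle.
$$

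All three remaining terms will vanish under the stated hypotheses. The assumption $u_+(\cdot,0)=0$ combined with \eqref{bdry-u} gives $u_-(t,0)=Bu_+(t,0)=0$, so $u(t,0)\equiv 0$; this simultaneously kills the source integral and the entire $x=0$ boundary contribution, so the boundary condition \eqref{bdry-v} on $v$ is not needed in this proof. At $x=1$ I would split $\langle \Sigma(1) u(t,1), v(t,1)\rangle$ into its $-/+$ blocks: the first vanishes by \eqref{bdry-v0}, while the second, $\langle \Sigma_+(1) u_+(t,1), v_+(t,1)\rangle$, vanishes because no control is applied at $x=1$ in the setting of this lemma (so $u_+(t,1)=0$). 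Thus $\partial_t \int_0^1 \langle u, v\rangle\, dx \equiv 0$ and the asserted identity follows by integration over $[0,T]$.

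The computation is essentially a direct transcription of the one in \Cref{lem-FT}, which is presumably why the authors omit it. The only point that requires attention is the bookkeeping of which boundary datum annihilates which boundary term: here $u_+(t,0)=0$ takes care of the $x=0$ side through $u(t,0)\equiv 0$, and \eqref{bdry-v0} together with the absence of control at $x=1$ handles the $x=1$ side. No further analytic machinery is needed beyond what is already implicit in \Cref{lem-FT}, and the $L^2$ well-posedness referenced for broad solutions in the introduction justifies the differentiation under the integral after a routine density argument in $u_0$ and $v$.
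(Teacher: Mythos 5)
There is a genuine gap, and it sits exactly at the point you identify as the only delicate bookkeeping. The hypothesis of the lemma has to be read as $u_+(\cdot,1)=0$ (zero control at $x=1$), not as a condition at $x=0$: the components $u_+$ propagate from $x=1$ towards $x=0$, so $u_+(t,0)$ is an outflow trace determined by $u_0$ and by the data at $x=1$; it cannot be prescribed, and imposing $u_+(t,0)=0$ in addition to \eqref{bdry-u}, the initial condition and a datum at $x=1$ over-determines the system, so ``the unique solution'' of your literal reading generically does not exist. This reading is also forced by the role of the lemma: together with \Cref{lem-FT} and HUM it must yield the observability inequality \eqref{observability} for the \emph{free, uncontrolled} evolution, whose adjoint is $v\mapsto v(0,\cdot)$. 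Your step ``$u_+(t,1)=0$ because no control is applied'' is in fact the intended hypothesis, but once it replaces the $x=0$ condition, $u(t,0)$ is \emph{not} zero, and the two cancellations you rely on at $x=0$ --- the vanishing of the nonlocal source integral and of the boundary term $\langle\Sigma(0)u(t,0),v(t,0)\rangle$ --- both fail.

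Consequently the assertion that \eqref{bdry-v} is not needed is wrong; that boundary condition is the crux of the proof. The correct argument repeats the computation of \Cref{lem-FT} verbatim: the $x=1$ contribution vanishes because $v_-(t,1)=0$ and $u_+(t,1)=0$, while at $x=0$ one uses \eqref{bdry-u}, the structure \eqref{cond-S} of $S$, and the symmetry of $\Sigma_\pm$ to write
\begin{equation*}
\int_0^1\langle S(x)u(t,0),v(t,x)\rangle\,dx-\langle\Sigma(0)u(t,0),v(t,0)\rangle
=\Big\langle u_+(t,0),\,\int_0^1\big(S_{-+}\tr v_-+S_{++}\tr v_+\big)(t,x)\,dx- B\tr\Sigma_-(0)v_-(t,0)-\Sigma_+(0)v_+(t,0)\Big\rangle,
\end{equation*}
which is zero precisely by \eqref{bdry-v}. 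Only then does $\partial_t\int_0^1\langle u,v\rangle\,dx=0$ hold, giving the identity. Your proof as written establishes a statement about a solution that does not exist rather than the lemma the paper needs.
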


Combining \Cref{lem-FT} and \Cref{lem-ST},  making  a translation in time, and applying the Hilbert uniqueness method (see e.g. \cite[Chapter 2]{Coron09}), we obtain

\begin{lemma}\label{lem-observability} Let $T>0$. System \eqref{eq-u}-\eqref{control-u} is null controllable at the time $T$ if and only if, for some positive constant $C$,
\begin{equation}\label{observability}
\int_{-T}^0 |v_+(t, 1)|^2 \, dt \ge C \int_0^1 |v(-T, x)|^2 \, dx \; \forall \, v \in [L^2(0, 1)]^n,
\end{equation}
where $v(\cdot, \cdot)$ is the unique solution of the system
\begin{equation}\label{eq-v-O}
\partial_t v (t, x) =  \Sigma(x) \partial_x v (t, x) + \Sigma'(x)  v(t, x)   \mbox{ for } (t, x)  \in (-\infty, 0) \times (0, 1),
\end{equation}
with, $t < 0$,
\begin{equation}\label{bdry-v0-O}
v_-(t, 1)  = 0,
\end{equation}
\begin{equation}\label{bdry-v-O}
\Sigma_+ (0) v_+(t, 0) = - B\tr  \Sigma_- (0) v_- (t, 0)
+ \int_0^1  S_{-+}\tr(x) v_- (t, x) + S_{++}\tr(x) v_+(t, x) \, dx,
\end{equation}
and
\begin{equation}\label{initial-v-O}
v(t = 0, \cdot) = v \mbox{ in } (0, 1).
\end{equation}
\end{lemma}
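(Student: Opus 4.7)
The plan is to apply the classical Hilbert uniqueness method (HUM): recast null-controllability as a range inclusion between two bounded linear operators, dualize via a standard functional-analytic criterion, and then use \Cref{lem-FT} and \Cref{lem-ST} to identify the adjoints with the backward adjoint system.

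Let $S_T : [L^2(0,1)]^n \to [L^2(0,1)]^n$ denote the solution operator of \eqref{eq-u}--\eqref{bdry-u} corresponding to the zero control $U \equiv 0$, i.e., $S_T u_0 := u(T,\cdot)$. By linearity, for every $u_0$ and $U$ the solution of \eqref{eq-u}--\eqref{control-u} at time $T$ reads $u(T,\cdot) = S_T u_0 + \cF_T U$. Consequently, the system is null-controllable at time $T$ if and only if $\operatorname{Range}(S_T) \subset \operatorname{Range}(\cF_T)$. Invoking the standard Douglas range-inclusion criterion (the functional-analytic backbone of HUM; see e.g.\ \cite[Chapter~2]{Coron09}), this inclusion is equivalent to the existence of $C > 0$ with
\begin{equation*}
\|S_T^{*} v\|_{[L^2(0,1)]^n}^{\,2} \le C \, \|\cF_T^{*} v\|_{[L^2(0,T)]^m}^{\,2} \quad \text{for every } v \in [L^2(0,1)]^n.
\end{equation*}

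I would then identify the two adjoints. By \Cref{lem-FT}, $\cF_T^{*} v = \Sigma_+(1) v_+(\cdot,1)$ on $(0,T)$, where $v(\cdot,\cdot)$ denotes the unique solution of the backward adjoint problem \eqref{eq-v}--\eqref{initial-v} with terminal datum $v$ at $t = T$. Applying \Cref{lem-ST} to the same $v$ gives $S_T^{*} v = v(0,\cdot)$. Since $\Sigma_+(1) = \operatorname{diag}(\lambda_{k+1}(1),\dots,\lambda_{k+m}(1))$ is a fixed positive diagonal matrix, $\|\Sigma_+(1) v_+(\cdot,1)\|_{L^2(0,T)}$ is equivalent to $\|v_+(\cdot,1)\|_{L^2(0,T)}$ up to a constant that can be absorbed into $C$. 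The inequality above therefore reduces to $\int_0^1 |v(0,x)|^2\,dx \le C' \int_0^T |v_+(t,1)|^2\,dt$.

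To match the form stated in the lemma I shift time by $-T$. Since $\Sigma$, $\Sigma'$, and $S$ are independent of $t$, the backward system is time-invariant, so $\tilde v(t,x) := v(t+T,x)$ satisfies precisely \eqref{eq-v-O}--\eqref{initial-v-O} with terminal condition now imposed at $t = 0$; simultaneously $\int_0^T |v_+(t,1)|^2\,dt = \int_{-T}^0 |\tilde v_+(t,1)|^2\,dt$ and $v(0,\cdot) = \tilde v(-T,\cdot)$, which yields exactly \eqref{observability}. The argument is essentially soft: no quantitative PDE estimate is required beyond the two integration-by-parts identities already packaged in \Cref{lem-FT} and \Cref{lem-ST}, so no real obstacle is expected; the only points demanding bookkeeping care are the application of the Douglas criterion, the absorption of the $\Sigma_+(1)$ factor, and the time translation from $(0,T)$ to $(-T,0)$.
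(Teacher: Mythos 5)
Your proposal is correct and follows exactly the route the paper intends: the paper's own justification is the one-line statement that the lemma follows from \Cref{lem-FT} and \Cref{lem-ST}, a time translation, and the Hilbert uniqueness method, and you have simply filled in the standard details (range inclusion $\operatorname{Range}(S_T)\subset\operatorname{Range}(\cF_T)$, the Douglas-type dual criterion, identification of $S_T^*v=v(0,\cdot)$ and $\cF_T^*v=\Sigma_+(1)v_+(\cdot,1)$, absorption of the invertible diagonal factor $\Sigma_+(1)$, and the shift to $(-T,0)$). No gaps.
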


Finally, we establish a compactness type result which is one of the key ingredients in the proof of \Cref{thm2}.

\begin{lemma}\label{lem-compact} Let $k \ge m \ge 1$, $B \in \cB$,  and $T \ge T_{opt}$. Assume that
$(v_N)$ be a sequence of solutions of \eqref{eq-v-O}-\eqref{bdry-v-O} $($with $v_N(0, \cdot)$ in $[L^2(0, 1)]^n$$)$ such that
\begin{gather}\label{thm2-generating-eq-a}
\sup_{N} \|v_N (-T, \cdot) \|_{L^2(0, 1)} < + \infty,
\\
\label{thm2-generating-eq-b}
\lim_{N \to + \infty} \|v_{N, +}(\cdot, 1) \|_{L^2(-T, 0)} = 0.
\end{gather}
We have, up to a subsequence,
\begin{equation}\label{thm2-claim1}
v_N(-T, \cdot) \mbox{ converges in } L^2(0, 1),
\end{equation}
and  the limit $V \in [L^2(0, 1)]^n$ satisfies the equation
\begin{equation}\label{thm2-claim2}
V =  {\mathcal K} V,
\end{equation}
for some compact operator $\mathcal{K}$ from  $[L^2(0, 1)]^n$ into itself. Moreover, $\cK$ depends only  on $\Sigma$, $S$, and $B$; in particular, $\cK$ is independent of $T$.
\end{lemma}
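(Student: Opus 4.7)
The plan is to combine the method of characteristics with the nonlocal structure of the boundary condition \eqref{bdry-v-O} at $x=0$. The hypothesis $v_{N,+}(\cdot,1)\to 0$ kills the purely transported contributions coming from the right boundary, while the integral appearing in \eqref{bdry-v-O} furnishes the compactness.

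Since $T\ge T_{opt}\ge \tau_k\ge \tau_i$ for every $i\le k$, each characteristic of $v_{-,i,N}$ through $(-T,x)$ traced forward in time reaches $x=1$, where $v_{-,i,N}=0$ by \eqref{bdry-v0-O}; the linear homogeneous ODE along the characteristic driven by $\Sigma'(x)v$ preserves the zero value, so $v_{-,N}(-T,\cdot)\equiv 0$ for every $N$, and hence $V_-=0$. For $i>k$, the inequality $T\ge T_{opt}\ge\tau_{k+1}\ge\tau_i$ ensures that the characteristic of $v_{+,i,N}$ through $(-T,x)$ meets $x=0$ at a time $t_i^\sharp(x)=-T+t_i(x)\in[-T,0]$, where $t_i(x):=\int_0^x d\xi/\lambda_i(\xi)$, yielding a pointwise identity of the form $v_{+,i,N}(-T,x)=e_i(x)\,v_{+,i,N}(t_i^\sharp(x),0)$ with a smooth nonvanishing factor $e_i$ depending only on $\Sigma$. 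The problem thus reduces to controlling the left-trace $v_{+,N}(\cdot,0)$.

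The boundary condition \eqref{bdry-v-O} expresses this trace as a linear combination of $v_{-,N}(\cdot,0)$ and the bounded linear functional $I_N(t):=\int_0^1 S_{-+}\tr(x)v_{-,N}(t,x)+S_{++}\tr(x)v_{+,N}(t,x)\,dx$, and the triangular structure $(S_{++})_{pq}=0$ for $q\le p$ lets this coupling be resolved componentwise in the $+$-variables. The remaining unknown $v_{-,N}(\cdot,0)$ is again handled by characteristics: for $t\le-\tau_k$ it is identically zero, while on the complementary short interval the invertibility of the trailing $i\times i$ blocks of $B$ (the condition $B\in\cB$), combined with the characteristic identity $v_{+,i,N}(t,1)=e_i(1)v_{+,i,N}(t+\tau_i,0)$ valid for $t\le-\tau_i$, allows the remaining unknown pieces to be routed either through $v_{+,N}(\cdot,1)$ (vanishing by hypothesis) or through further applications of $I_N$.

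Passing to the limit $N\to\infty$, every surviving contribution to $v_{+,N}(-T,\cdot)$ factors through at least one integration in $x$; since the composition of a bounded transport operator on $L^2$ with a functional that is finite-rank in the $x$-variable is compact on $[L^2(0,1)]^n$, we obtain strong convergence along a subsequence and a fixed-point identity $V=\cK V$, with $\cK$ built only from $\Sigma$, $S$, and $B$ after a time-shift reduces the $T$-dependent intervals to intrinsic ones of the form $[0,\tau_j]$. The main obstacle will be carrying out this decoupling on the short interval $t\in(-T,-\tau_k)$ when $m\ge 2$, where several $v_{-,N}(\cdot,0)$-traces are simultaneously nonzero: the invertibility conditions defining $\cB$ are precisely what makes the recursion well-posed from the component of largest index downward, and the triangular structure of $S_{++}$ prevents it from closing back on itself; checking the $T$-independence of the final $\cK$ requires this recursion to telescope so that only intrinsic characteristic times and integral kernels appear in the final formula.
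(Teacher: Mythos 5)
Your overall strategy coincides with the paper's: use characteristics to reduce everything to the traces at $x=0$, close the system with the nonlocal boundary condition \eqref{bdry-v-O}, and extract compactness from the integral term. But the proposal stops exactly where the proof begins, and the one concrete claim you make about the difficult region is wrong. You write that the decoupling must be carried out on $t\in(-T,-\tau_k)$, ``where several $v_{-,N}(\cdot,0)$-traces are simultaneously nonzero''; this contradicts your own (correct) observation two sentences earlier that $v_{N,-}(\cdot,0)$ vanishes identically for $t\le-\tau_k$. The unknown plus-traces live on intervals abutting $t=-T$, namely $v_{N,k+\ell}(\cdot,0)$ on $(-T,-T+\tau_{k+\ell})$, and the genuine difficulty is that these intervals overlap the region where the minus-traces $v_{N,j}(\cdot,0)$ with $j$ close to $k$ are still alive (i.e.\ $t>-\tau_j$) --- which happens precisely when $T<\tau_k+\tau_{k+1}$, the regime in which the theorem improves on Russell's time. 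The missing idea is the bookkeeping that resolves this overlap: partition $(-T,-T+\tau_{k+1})$ into the intervals $(\rho_\ell,\rho_{\ell-1})$ with $\rho_\ell=-(T-\tau_{k+\ell+1})$; on $(\rho_\ell,\rho_{\ell-1})$ exactly the last $m-\ell$ minus components survive at $x=0$ (the inequality $T\ge\tau_{k-m+\ell}+\tau_{k+\ell}$ from the definition of $T_{opt}$ kills $v_{N,j}$ for $j\le k-m+\ell$), while exactly the last $m-\ell$ plus traces $v_{N,k+\ell+1}(\cdot,0),\dots,v_{N,k+m}(\cdot,0)$ are already known to tend to $0$ there via the $x=1$ data. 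The invertibility of the trailing $(m-\ell)\times(m-\ell)$ minor of $B$ then turns those $m-\ell$ boundary equations into a Volterra equation for the surviving minus traces driven by the $\ell$ unknown plus traces, and the remaining $\ell$ equations (using the triangularity of $S_{++}$) return the unknown plus traces. Solving this system recursively in $\ell$ \emph{is} the proof; deferring it as ``the main obstacle'' leaves nothing to pass to the limit.

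Two smaller points. First, the compactness justification ``a bounded transport operator composed with a functional that is finite-rank in the $x$-variable is compact'' is not a theorem: the map $u\mapsto\int_0^1 u(\cdot,x)\,dx$ precomposed with $g\mapsto g(t)h(x)$ is a nonzero multiple of the identity. What actually gives compactness is that the characteristic change of variables converts $\int_0^1 S_{-+}\tr(x)v_{N,-}(t,x)+S_{++}\tr(x)v_{N,+}(t,x)\,dx$ into an integral operator in the time variable with a bounded kernel on a bounded set, hence a Hilbert--Schmidt operator. Second, the boundary condition also produces the pointwise, non-compact term $-B\tr\Sigma_-(0)v_{N,-}(t,0)$ in the expression for the plus traces; it is harmless only because the recursion above has already expressed the surviving minus traces as a compact operator applied to the plus traces plus an $o(1)$ error --- another reason the recursion cannot be waved away.
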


\begin{proof} Denote,  for $1 \le \ell \le m$,
$$
V_{N, \ell} = (v_{N, k - m  +  \ell + 1}, \cdots, v_{N, k})\tr,
$$
$$
W_{N, \ell} = (v_{N, k + 1}, \cdots, v_{N, k+\ell})\tr,
$$
and set, for $0 \le \ell \le m-1$,
$$
\rho_{\ell} = - (T - \tau_{ k  + \ell+1}),
$$
$$
\hcD_{\ell+1} = \Big\{ (t, s): t \in (\rho_{\ell + 1}, \rho_{\ell}); \rho_{m-1} \le s \le t \Big\},
$$
and
$$
\cD_{\ell+1} = \Big\{ (t, s): t \in (\rho_{\ell + 1}, \rho_{\ell}); t \le s \le \rho_0 \Big\},
$$
with the convention $\rho_{m} =  - T$.

Note that $v_{N, -}(\cdot, 0) = 0$ for $t \in (\rho_m, \rho_{m-1})$.

$\bullet$ We are first concerned about the time interval $(\rho_1, \rho_0)$ and $x=0$.   Using \eqref{thm2-generating-eq-b}
and  the characteristic method one gets that, for $k+2 \le j \le k+m$,
\begin{equation}\label{thm2-st1}
v_{N, j}(t, \cdot) \to 0 \mbox{ in } L^2(0, 1) \mbox{ for } t \in (\rho_{1}, \rho_{0})
\end{equation}
and
\begin{equation}\label{thm2-st2}
v_{N, j}(\cdot, 0) \to 0 \mbox{ in } L^2(\rho_{1}, \rho_{0}).
\end{equation}
Recall that, for $t \in (-T, 0)$,
\begin{equation}\label{thm2-S1}
\Sigma_+ (0) v_{N, +}(t, 0) = - B\tr  \Sigma_- (0) v_{N, -} (t, 0)  + \int_0^1 S_{-+}\tr(x) v_{N, -} (t, x) + S_{++}\tr(x) v_{N, +}(t, x) \, dx,
\end{equation}
and note, since $v_{N, -}(\cdot, 1) = 0$ in $(-T, 0)$,  that, in $(0, 1)$,
\begin{equation}\label{thm2-v1}
v_{N, j}(t, \cdot) = 0 \mbox{ for } t \in (\rho_{1}, \rho_{0}),  \mbox{ for } 1 \le j \le k-m + 1.
\end{equation}

First, consider the last $(m-1)$ equations of \eqref{thm2-S1} for $t \in (\rho_1, \rho_0)$.
Using \eqref{cond-B-1} with $i = m-1$, and \eqref{thm2-v1}, and viewing $v_{N, j}(\cdot, 0)$ in
$(\rho_{1}, \rho_{0})$ and $v_{N, j} (t, \cdot)$ in $(0, 1)$ for $k+2 \le j \le k+m$ as parameters,  we obtain
\begin{equation*}
V_{N, 1}(t, 0) = \int_{\rho_{m-1}}^t  G_{1}(t, s) V_{N, 1} (s, 0) \, ds  + \int_{t}^{\rho_{0}}H_1(t, s) W_{N, 1}(s) \, ds  + F_{N, 1}(t) \mbox{ in } (\rho_{1}, \rho_{0}),
\end{equation*}
for some $G_1 \in \big[L^\infty (\hcD_1)\big]^{(m-1) \times (m-1)}$ and $H_1 \in
\big[ L^\infty (\cD_1) \big]^{(m-1) \times 1}$
which depends only on $\Sigma$, $B$, and $S$, and for some $F_{N, 1} \in [L^2(\rho_{1}, \rho_{0})]^{m-1}$,which depends only on $\Sigma$, $B$, and $S$, and $v_{N, j}(\cdot, t)$  and  $v_{N, j}(t, \cdot)$ for $ t\in (\rho_1, \rho_0)$, and   for $k+2 \le j \le k+m$. Moreover, by \eqref{thm2-st1} and \eqref{thm2-st2},
$$
\mbox{$F_{N, 1} \to 0$ in $L^2(\rho_{1}, \rho_{0})$ as $N \to + \infty$}.
$$

Next consider the first equation of \eqref{thm2-S1} for $t \in (\rho_1, \rho_0)$.  Using the fact $(S_{++}\tr)_{1q} = 0$ for $1 \le q \le m$ by \eqref{cond-S}, and applying the characteristic method,  we have
$$
W_{N, 1} (t, 0) = Q_{1}  V_{N, 1} (t, 0) + \int_{\rho_{m-1}}^t  L_{1} (t, s)V_{N, 1} (s, 0) \, ds,
$$
for some constant $Q_1 \in \mR^{1 \times (m-1)}$,  and  for some $L_1 \in \big[L^\infty(\hcD_1) \big]^{1 \times (m-1)}$
both depending only on $\Sigma$, $B$, and $S$.

$\bullet$ Generally, let $1 \le \ell \le m$, and consider the time interval $(\rho_\ell, \rho_{\ell-1})$ and $x=0$.
As $N \to + \infty$, since,
$$
\| v_N(\cdot, 1)\|_{L^2(-T, 0)}  \to 0,
$$
it follows that, for $k+ \ell  + 1 \le j \le k+m$,
\begin{equation}\label{thm2-st3}
v_{N, j}(t, \cdot)  \to 0 \mbox{ in } L^2(0, 1) \mbox{ for } t \in (\rho_{\ell}, \rho_{\ell-1})
\end{equation}
and
\begin{equation}\label{thm2-st4}
v_{N, j}(\cdot, 0) \to 0 \mbox{ in } L^2(\rho_{\ell}, \rho_{\ell-1}).
\end{equation}
Note that, in $(0, 1)$,
\begin{equation}\label{thm2-v2}
v_{N, j}(t, \cdot) = 0 \mbox{ for } t \in (\rho_{\ell}, \rho_{\ell-1}) \mbox{ for } 1 \le j \le k-m + \ell.
\end{equation}

Consider the last $(m- \ell)$ equations of system \eqref{thm2-S1} for $t \in (\rho_\ell, \rho_{\ell-1})$.
Using \eqref{cond-B-1} with $i = m - \ell$, and \eqref{thm2-v2}, and  viewing $v_{N, j}(\cdot, 0)$ in $(\rho_{\ell}, \rho_{\ell -1})$ and $v_{N, j}(t, \cdot)$ in $(0, 1)$ for $k + \ell + 1 \le j \le k + m$ as parameters, we obtain, for $t \in (\rho_{\ell}, \rho_{\ell-1})$,
\begin{equation}\label{thm2-cp1}
V_{N, \ell}(t, 0) = \int_{\rho_{m-1}}^t  G_{\ell}(t, s) V_{N, \ell} (s, 0) \, ds + \int_{t}^{\rho_{0}} H_{\ell}(t, s) W_{N, \ell}(s) \, ds  + F_{N, \ell}(t),
\end{equation}
for some $G_{\ell} \in \big[L^\infty( \hcD_\ell) \big]^{(m - \ell) \times (m - \ell)}$
and $H_\ell \in \big[ L^\infty (\cD_\ell)\big]^{(m - \ell) \times \ell}$ 
which depends only on $\Sigma$, $B$, and $S$, and for some
$F_{N, \ell} \in [L^2(\rho_{\ell}, \rho_{\ell-1})]^{m- \ell}$
which depends only on $\Sigma$, $B$, and $S$, and $v_{N, j}(\cdot, 0)$ and $v_{N, j}(t, \cdot)$ for $t \in (\rho_\ell, \rho_{\ell-1})$, and for $k+ \ell +1  \le j \le k + m$. Moreover, by \eqref{thm2-st3} and \eqref{thm2-st4}, we have
$$
\mbox{$F_{N, \ell} \to 0$ in $L^2(\rho_{\ell}, \rho_{\ell-1})$ as $N \to + \infty$}.
$$

Next consider the first $\ell$ equations of \eqref{thm2-S1} for $t \in (\rho_\ell, \rho_{\ell-1})$.  We have
\begin{equation}\label{thm2-cp2}
W_{N, \ell} (t, 0) = Q_{\ell} V_{N, \ell} (t, 0)
+ \int_{\rho_{m-1}}^t  L_{\ell} (t, s)V_{N, \ell} (s, 0) \, ds  + \int_t^{\rho_{0}}
M_{\ell}(t, s) W_{N, \ell} (s, 0) \, ds.
\end{equation}
for some constant $Q_\ell \in \mR^{\ell \times (m-\ell)}$,  for some
$L_{\ell} \in
\big[ L^\infty (\hcD_\ell) \big]^{\ell \times (m - \ell)}$ and $M_\ell \in \big[L^\infty( \cD_\ell) \big]^{\ell \times \ell}$,
all  depending only on $\Sigma$, $B$, and $S$. In the case $\ell = m$, \eqref{thm2-cp1} is irrelevant and \eqref{thm2-cp2} is understood in the sense that the first two terms in the RHS are 0.

We have
\begin{itemize}
\item[i)] $v_{N, -}(-T, \cdot) = 0$ in $(0, 1)$;

\item[ii)] the information of $v_{N, +}(-T, \cdot )$ in $(0, 1)$ is encoded by the information of $v_{N, k+1}(\cdot, 0)$ on $(\rho_m, \rho_0)$, of $v_{N, k+2}(\cdot, 0)$ on $(\rho_m, \rho_1)$,  \dots, of $v_{N, k+m}(\cdot, 0)$ on $(\rho_m,  \rho_{m-1})$, by the characteristic method;

\item[iii)] Using \eqref{thm2-cp1} for $\ell = m-1$, one can solve $V_{N, m-1}$ as a function of $W_{N, m-1}$ and $F_{N, m-1}$. Continue the process with $\ell = m-2$, then with $\ell = m-3$, \dots, and finally with $\ell = 1$. Noting that
$$
v_{N, k - m + \ell + 1}(\cdot, 0) = 0 \mbox{ in $(\rho_{m-1}, \rho_\ell)$,}
$$
one can solve
$$
V_{N, 1}(\cdot, 0) \in L^2(\rho_{m-1}, \rho_0)  \times \cdots \times L^2(\rho_{m-1}, \rho_{m-2})
$$
as a function of $W_{N, 1} \in L^2(\rho_m, \rho_0) \times \cdots \times L^2(\rho_m, \rho_{m-1})$ and $F_{N, j}$ with $j = 1, \dots, m$,  and one has
$$
V_{N, 1}(\cdot, 0) = \cK_1 W_{N, 1}(\cdot, 0) + g_{N}.
$$
where $g_N  \in L^2(\rho_{m-1}, \rho_0) \times  \cdots \times L^2(\rho_{m-1}, \rho_{m-2})$ converges to 0 in the corresponding $L^2$-norm and $\cK_1$ is a compact operator depending only on $\Sigma$, $S$ and $B$.
\end{itemize}
The conclusion now follows from \eqref{thm2-cp2}. The proof is complete.
\end{proof}

\subsection{Proof of \Cref{thm2}}  The arguments are in the spirit of \cite{BLR92} (see also \cite{Rosier97}). For $T> T_{opt}$,  set
\begin{multline}\label{thm2-def-YT}
Y_T := \Big\{ V \in L^2(0, 1): V \mbox{ is the limit in $L^2(0, 1)$ of some subsequence of solutions $\big(v_N(-T, \cdot) \big)$ }\\[6pt]  \mbox{of  \eqref{eq-v-O}-\eqref{bdry-v-O} such that
 \eqref{thm2-generating-eq-a} and \eqref{thm2-generating-eq-b} hold}\Big\}.
\end{multline}
It is clear that $Y_T$ is a vectorial space. Moreover, by \eqref{thm2-claim2} and the compact property of $\cK$, we have
\begin{equation}\label{thm2-pro-YT-1}
\dim Y_T \le  C,
\end{equation}
for some positive constant $C$ independent of $T$.

We next show that
\begin{equation}\label{thm2-pro-YT-2}
Y_{T_2} \subset Y_{T_1} \mbox{ for }  T_{opt} < T_1 < T_2.
\end{equation}
Indeed,  let $V \in Y_{T_2}$. There exists a sequence of solutions $(v_N)$  of \eqref{eq-v-O}-\eqref{bdry-v-O} such that
\begin{equation}
\left\{\begin{array}{l}
v_N (-T, \cdot) \to V  \mbox{ in } L^2(0, 1),  \\[6pt]
\lim_{N \to + \infty} \|v_{N, +}(\cdot, 1) \|_{L^2(-T_2, 0)} = 0.
\end{array} \right.
\end{equation}
By considering the sequence $v_N(\cdot - \tau, \cdot)$ with $\tau = T_2 - T_1$, we derive that $V \in Y_{T_1}$.

 By \Cref{lem-observability}, to obtain the null-controllability at the time $T > T_{opt}$, it suffices to  prove \eqref{observability} by contradiction. Assume that  there exists a sequence of solutions $(v_N)$ of  \eqref{eq-v-O}-\eqref{bdry-v-O} such that
\begin{equation}\label{observability-contradiction}
N \int_{-T}^0 |v_{N, +}(t, 1)|^2 \, dt \le \int_0^1 |v_N(-T, x)|^2 \, dx =1.
\end{equation}
By \eqref{thm2-claim1}, up to a subsequence, $v_N(-T, \cdot)$ converges in $L^2(0, 1)$ to a limit $V$. It is clear that $\|V \|_{L^2(0, 1)} = 1$; in particular, $V \neq 0$. Consequently,
\begin{equation}\label{thm2-pro-YT-3}
Y_T \neq \{ 0\}.
\end{equation}

By \eqref{thm2-pro-YT-1}, \eqref{thm2-pro-YT-2},  and \eqref{thm2-pro-YT-3}, there exist $T_{opt} < T_{1} < T_2 < T$ such that
\begin{equation*}
\dim Y_{T_1} = \dim Y_{T_2} \neq 0.
\end{equation*}

We claim that, for $V \in Y_{T_1}$,
\begin{equation}\label{thm2-claimB}
 \Sigma \partial_x V + \Sigma' V  \mbox{ is an element in } Y_{T_1}.
\end{equation}
Indeed, since $Y_{T_1} = Y_{T_2}$, by the definition of $Y_{T_2}$, there exists a sequence of solutions $(v_N)$ of  \eqref{eq-v-O}-\eqref{bdry-v-O} such that
\begin{equation}\label{thm2-generating-eq-1}
\left\{\begin{array}{c}
\lim_{N \to + \infty} \|v_{N, +}(\cdot, 1) \|_{L^2(-T, 0)} = 0, \\[6pt]
V = \lim_{N \to + \infty} v_N(-T_2, \cdot) \mbox{ in } L^2(0, 1).
\end{array}\right.
\end{equation}
Using \eqref{thm2-pro-YT-2}, one may assume that $T_2 - T_1$ is small. We claim that, for $t \in(-T_2, T_1]$,
\begin{equation}\label{thm2-claimA}
\sup_n \|v_N(-t, \cdot) \|_{L^2(0, 1)} < +\infty.
\end{equation}
Noting that $\Sigma$ and $\Sigma'$ are diagonal, we have, by the characteristic method, for $t \in (-T_2, -T_{opt})$
\begin{equation}\label{thm2-claimA-p1}
v_{N, -} (t, \cdot)= 0 \mbox{ in } (0, 1).
\end{equation}
Using the characteristic method again, we also have, for $t \in (-T_2, -T_1]$,
\begin{equation}
\|v_{N, +}(t, \cdot) \|_{L^2(0, 1)} \le C \Big( \|v_{N, +}(-T_2, \cdot) \|_{L^2(0, 1)} + \|v_{N, +}(\cdot, 1) \|_{L^2(-T_2, t)} \Big).
\end{equation}
We derive from \eqref{thm2-generating-eq-1} that
\begin{equation}\label{thm2-claimA-p2}
\sup_n \|v_{N, +}(t, \cdot) \|_{L^2(0, 1)} < + \infty.
\end{equation}
Combining \eqref{thm2-claimA-p1} and \eqref{thm2-claimA-p2} yields \eqref{thm2-claimA}.

Using \eqref{thm2-claim1}, without loss of generality, one may assume that
$$
v_N(-T_1, \cdot ) \to \hat V \mbox{ in } L^2(0, 1) \mbox{ for some $\hat V \in L^2(0, 1)$}.
$$
Let $\hv$ be the unique solution of the system
\begin{equation}\label{thm2-eq-v}
\partial_t \hv (t, x) =  \Sigma(x) \partial_x \hv (t, x) + \Sigma'(x)  \hv(t, x)  \mbox{ for } (t, x)  \in (-\infty, -T_1) \times (0, 1),
\end{equation}
with, for $t < -T_1$,
\begin{equation}\label{thm2-bdry-v0}
v_-(t, 1)  = 0,
\end{equation}
\begin{equation}\label{thm2-bdry-v}
\Sigma_+ (0) \hv_+(t, 0) = - B\tr  \Sigma_- (0) \hv_- (t, 0)
+ \int_0^1  S_{-+}\tr(x) \hv_- (t, x) + S_{++}\tr(x) \hv_+(t, x) \, dx,
\end{equation}
and
\begin{equation}\label{thm2-initial-v}
\hv(t = -T_1, \cdot) = \hat V.
\end{equation}
One then has, for $\tau < -T_1$,
\begin{equation}
v_{N} \to \hv \mbox{ in } C^0\big([-\tau, -T_1]; L^2(0, 1) \big).
\end{equation}
In particular, by \eqref{thm2-pro-YT-2}, we have
\begin{equation}\label{thm2-hv1}
\hv(t, \cdot) \in Y_{T_1} \mbox{ for } t \in [-T_2, -T_1)
\end{equation}
and
$$
V = v(-T_2, \cdot) \mbox{ in } (0, 1).
$$
Since, in the distributional sense and hence in $Y_{T_1}$,
$$
\partial_t \hv(-T_2, \cdot) = \lim_{\eps  \to 0_+} \frac{1}{\eps} \Big[ \hv(-T_2 + \eps, \cdot) - \hv(-T_2, \cdot) \Big],
$$
and,  for $\eps > 0$ small,
$$
\frac{1}{\eps} \Big[\hv(-T_2 + \eps, x) - \hv(-T_2, x) \Big] \in Y_{T_1} \mbox{ by } \eqref{thm2-hv1},
$$
one derives that
$$
\Sigma \partial_x \hv(-T_2,  \cdot ) + \Sigma' \hv(-T_2, \cdot) \in Y_{T_1},
$$
which implies \eqref{thm2-claimB}.

Recall that $Y_{T_1}$ is real and  of finite dimension. Consider its natural extension as a complex vectorial space and still denote its extension by $Y_{T_1}$. Define
\begin{equation*}
\begin{array}{cccc}
\cA: & Y_{T_1} & \to  & Y_{T_1} \\[6pt]
& V & \mapsto & \Sigma \partial_x V + \Sigma' V.
\end{array}
\end{equation*}
From the definition of $Y_{T_1}$, it is clear that, for $V \in Y_{T_1}$,
\begin{equation}\label{thm2-bdry-V0}
V_-(1)  = 0
\end{equation}
and
\begin{equation}\label{thm2-bdry-V}
\Sigma_+ (0) V_+(0) = - B\tr  \Sigma_- (0) V_- (0)
+ \int_0^1  S_{-+}\tr(x) V_- (x) + S_{++}\tr(x) V_+(x) \, dx.
\end{equation}
Since $Y_{T_1} \neq \{0 \}$ and $Y_{T_1}$ is of finite dimension, there exists $\lambda \in \mC$ and $V \in Y_{T_1} \setminus \{0 \}$ such that
\begin{equation*}
\cA V = \lambda V.
\end{equation*}
Set
$$
v(t, x) = e^{\lambda t} V(x) \mbox{ in } (-\infty, 0) \times (0, 1).
$$
Using \eqref{thm2-bdry-V0} and \eqref{thm2-bdry-V}, one can verify that $v(t, x)$ satisfies \eqref{eq-v-O}-\eqref{bdry-v-O}. Using \eqref{thm2-bdry-V0} and applying the characteristic method, one deduce that
\begin{equation}\label{thm2-A1}
v_- (t, \cdot) = 0 \mbox{ for } t < 0.
\end{equation}
From \eqref{bdry-v-O},  we then obtain
\begin{equation}\label{thm2-bdry-V-new}
\Sigma_+ (0) v_+(t, 0) =  \int_0^1 S_{++}\tr(x) v_+(t, x) \, dx.
\end{equation}
Using the structure of $S_{++}$, we then have
$$
v_{k+1}(t, 0) = 0 \mbox{ for } t < 0.
$$
By the characteristic method, this in turn implies that, for $t < - \tau_{k+1}$.
$$
v_{k+1}(t, \cdot) = 0 \mbox{ in } (0, 1)
$$
Similarly, we have, for $t < - \tau_{k+1} - \tau_{k+2} $,
$$
v_{k+2}(t, \cdot) = 0 \mbox{ in } (0, 1)
$$
\dots, and
for $t < - \tau_{k+1} - \cdots - \tau_{k+m}$,
$$
v_{k+m}(t, \cdot) = 0 \mbox{ in } (0, 1).
$$
Then $v(t, \cdot ) =  0 $ in $(0, 1)$ for $t < - \tau_{k+1} - \cdots - \tau_{k+m} $. It follows that $V = 0$ which contradicts the fact $V \neq 0$.
Thus \eqref{observability} holds and the null-controllability  is valid for $T > T_{opt}$. \qed

\section{Optimal time for the exact controllability}\label{sect-exact}

This section is on the exact controllability of \eqref{Sys-1}, \eqref{bdry:x=0-w}, and \eqref{control:x=1-w} for $m \ge k \ge 1$. We give a new short proof,  in the spirit of the one of \Cref{thm1},  of the following result due to  Hu and Olive \cite{HO19}.

\begin{theorem}\label{thm3}
Assume that $m \ge k \ge 1$. Set
\begin{equation*}
\cB_e: = \big\{B \in \mR^{k \times m}; \mbox{ such that  \eqref{cond-B-1} holds for  $1 \le i \le  k$} \big\},
\end{equation*}
Assume that $B \in \cB_e$. The control system \eqref{Sys-1}, \eqref{bdry:x=0-w}, and \eqref{control:x=1-w} is exactly controllable at any time $T$ greater than $T_{opt}$.
\end{theorem}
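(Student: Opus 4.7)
My plan is to mirror the structure of the proof of \Cref{thm2} in the exact-controllability setting.

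First, I would reduce to the $u$-system via the backstepping change of variables already used to derive \Cref{thm1} from \Cref{thm2}: exact controllability of \eqref{Sys-1}, \eqref{bdry:x=0-w}, \eqref{control:x=1-w} is equivalent to exact controllability of the reduced system \eqref{eq-u}--\eqref{control-u}, which in turn is characterized by the Hilbert uniqueness method as the observability inequality
\begin{equation*}
\|v_+(\cdot, 1)\|^2_{L^2(-T, 0)} \;\geq\; C \, \|v(0, \cdot)\|^2_{L^2(0, 1)}
\end{equation*}
for every adjoint solution $v$ of \eqref{eq-v-O}--\eqref{bdry-v-O}; note that the right-hand side involves $v(0,\cdot)$ instead of $v(-T,\cdot)$ as in \Cref{lem-observability}. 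Arguing by contradiction produces a sequence $(v_N)$ of adjoint solutions with $\|v_N(0, \cdot)\|_{L^2} = 1$ and $\|v_{N, +}(\cdot, 1)\|_{L^2(-T, 0)} \to 0$.

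Next, I would introduce the exact analog of the space $Y_T$,
\begin{equation*}
Y_T^e := \big\{V \in [L^2(0, 1)]^n : V \text{ is the strong $L^2$-limit of some subsequence of the above } v_N(0, \cdot)\big\},
\end{equation*}
and transfer the $Y$-space machinery of \Cref{thm2}. The finite-dimensionality of $Y_T^e$ is derived from a compactness argument modelled on \Cref{lem-compact}, now adapted to the $m \geq k$ regime and exploiting the hypothesis \eqref{cond-B-1} at level $i = k$: invertibility of the rightmost $k \times k$ block of $B$ lets me solve the nonlocal boundary relation \eqref{bdry-v-O} for $v_-(t, 0)$ in terms of $v_+(t, 0)$ and the integral term. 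Combined with the characteristic method for $v_+$ (which, using $T > T_{opt} \geq \tau_{k+1}$, propagates $\|v_{N, +}(\cdot, 1)\|_{L^2(-T, 0)} \to 0$ into strong $L^2$-convergence of $v_{N, +}(0, \cdot)$ on $(0,1)$ and of $v_{N, +}(\cdot, 0)$ on $(-T,0)$), inserting into the $v_-$ characteristics produces a Volterra integral equation of the second kind for $v_{N, -}(\cdot, 0)$ with vanishing $L^2$-source, yielding the compact fixed-point identity that is the analog of \eqref{thm2-claim2}. The nesting $Y_{T_2}^e \subset Y_{T_1}^e$ for $T_{opt} < T_1 < T_2$ and the invariance $(\Sigma \partial_x + \Sigma')(Y_T^e) \subset Y_T^e$ then follow as in the proof of \Cref{thm2}, by a time translation and a distributional derivative.

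To close the contradiction I would complexify $Y_{T_1}^e$ after stabilizing its dimension and pick an eigenvector $V \neq 0$ of $\mathcal{A} := \Sigma \partial_x + \Sigma'$ with eigenvalue $\lambda \in \mC$. The separated solution $v(t, x) = e^{\lambda t} V(x)$ inherits $v_+(\cdot, 1) = 0$ from the continuous dependence of the trace on initial data within the defining sequence, so $V_+(1) = 0$; combined with $V_-(1) = 0$ from \eqref{bdry-v0-O}, the diagonal first-order ODE $\mathcal{A} V = \lambda V$ forces each $V_j$ to vanish identically, contradicting $V \neq 0$. The main obstacle I anticipate is the finite-dimensionality step for $Y_T^e$: \Cref{lem-compact} is stated and proved for $k \geq m$, so its interval decomposition and triangular elimination must be rewritten with the index ranges swapped for $m \geq k$, and the argument must exploit the condition $i = k$ of $\cB_e$ (precisely the extra hypothesis distinguishing $\cB_e$ from $\cB$ when $m = k$) in order to close the Volterra loop all the way up to the full range of characteristics that now reach $x=0$ on $(-T,0)$.
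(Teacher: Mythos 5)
Your architecture coincides with the paper's: reduction to the $u$-system, the HUM characterization with $\|v(0,\cdot)\|_{L^2}$ on the right-hand side (\Cref{lem-observability-e}), the space $Y_T^e$ of limits of $v_N(0,\cdot)$, finite dimensionality via a compactness lemma in the spirit of \Cref{lem-compact}, dimension stabilization, and an eigenvector of $\Sigma\partial_x+\Sigma'$ on the complexified space. Your ending, however, is a genuine variant: the paper never uses $V_+(1)=0$, but instead propagates $V_-(1)=0$ along characteristics to get $v_-\equiv 0$ for $t<-\tau_k$ and then kills $v_{k+1},\dots,v_{k+m}$ in cascade through the boundary relation at $x=0$ using the strictly triangular structure of $S_{++}$. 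Your route --- $V_+(1)=0$ from the boundedness of the trace map $v(0,\cdot)\mapsto \Sigma_+(1)v_+(\cdot,1)$ (which is $\cF_T^*$ up to translation, hence bounded), followed by uniqueness for the scalar ODEs $(\pm\lambda_j V_j)'=\lambda V_j$ with $V_j(1)=0$ --- is shorter and bypasses the structure of $S_{++}$ entirely; it is legitimate once you note that the eigenvector lies in $H^1$, so its trace at $x=1$ makes sense.

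There is, however, a concrete error in your description of the compactness step. You assert that $\|v_{N,+}(\cdot,1)\|_{L^2(-T,0)}\to 0$ yields strong convergence of $v_{N,+}(\cdot,0)$ on all of $(-T,0)$. It does not: $v_{N,\ell}(t,0)$ for $\ell\ge k+1$ is transported from $v_{N,\ell}(t-\tau_\ell,1)$, hence is controlled by the trace at $x=1$ only for $t>-T+\tau_\ell$. Since $T$ is only assumed close to $T_{opt}$, which guarantees merely $T\ge\tau_j+\tau_{m+j}$, on the interval $(-\tau_j,-\tau_{j-1})$ only the components $v_{N,\ell}$ with $\ell\ge m+j$ are known to vanish, while $v_{N,k+1}(\cdot,0),\dots,v_{N,m+j-1}(\cdot,0)$ remain genuinely unknown there. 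Consequently one cannot ``solve the boundary relation \eqref{bdry-v-O} for $v_-(t,0)$ in terms of $v_+(t,0)$ and the integral term'' using only the $i=k$ minor: on each interval $(-\tau_j,-\tau_{j-1})$ one must invert the $(k-j+1)\times(k-j+1)$ minor of $B$ --- so the whole range $1\le i\le k$ of \eqref{cond-B-1} is used, not only the extra condition $i=k$ --- and simultaneously solve the first $m-k+j-1$ equations of \eqref{thm4-bdry} for the unknown $v_+$-components before running the triangular elimination. You do flag the interval decomposition and elimination as the main remaining obstacle, and this is exactly what \Cref{lem-compact-e} carries out; but as literally written your convergence claim for $v_{N,+}(\cdot,0)$ would only suffice for $T\ge\tau_k+\tau_{k+1}$, not for $T$ near $T_{opt}$, which is precisely the regime the theorem is about.
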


The exact controllability of \eqref{Sys-1}, \eqref{bdry:x=0-w}, and \eqref{control:x=1-w}  for $m \ge k$ has been investigated intensively in the literature. When $m = k$  under  a similar condition, the exact controllability was considered in \cite[Theorem 3.2]{Russell78}. In the quasilinear case with $m \ge k$,  the exact controllability was derived in   \cite[Theorem 3.2]{Li00} (see also \cite{LiRao02}) for $m \ge k$ and  for the time $\tau_k + \tau_{k+1}$ under a condition which is equivalent to the fact that  \eqref{cond-B-1} holds for $1 \le i \le k$. The result was improved when $C = 0$ in \cite{Hu15} when the time of control is $\max\{\tau_{k+1}, \tau_{k} + \tau_{m+1} \}$ involving backstepping. The exact controllablility of \eqref{Sys-1}, \eqref{bdry:x=0-w}, and \eqref{control:x=1-w} at the time  $T_{opt}$ was recently established in \cite{CoronNg19} for a generic $C$, i.e., for $\gamma C$ with $\gamma \in \mR$ outside a discrete subset of  $\gamma \in \mR$ using the backstepping approach. The generic condition of $C$ is not required for $C$ with small $L^\infty$-norm by the same approach.  It is worth noting that $\cB_e$ is an open subset of  the set of (real) $k\times m$ matrices  and the Hausdorff dimension of its complement is $k$.
The generic condition  is then removed recently in \cite{HO19} by  a different approach.

\medskip

In this section, we show how to adapt the approach for \Cref{thm1} to derive \Cref{thm3}.  As in the study of the  null-controllability, it suffices, by  \cite[Proposition 3.1]{CoronNg19},  to establish

\begin{theorem}\label{thm4} Let $m \ge k \ge 1$. System \eqref{eq-u}-\eqref{bdry-u} under the control law \eqref{control-u} is exactly controllable at any time larger than $T_{opt}$.
\end{theorem}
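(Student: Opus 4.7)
The plan is to follow the template of the proof of \Cref{thm2} with three adjustments appropriate to the exact-controllability and $m\ge k$ setting. First, the Hilbert uniqueness method applied to the straightforward analogues of \Cref{lem-FT} and \Cref{lem-ST} (the computations carry over verbatim) reduces exact controllability at time $T$ to the observability inequality
\begin{equation*}
\int_{-T}^{0}|v_+(t,1)|^2\,dt \;\ge\; C\,\|v(0,\cdot)\|_{L^2(0,1)}^2
\end{equation*}
for every solution $v$ of \eqref{eq-v-O}-\eqref{bdry-v-O}; compared with \Cref{lem-observability}, the right-hand side is now evaluated at $t=0$ instead of at $t=-T$.

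Second, I would establish a variant of \Cref{lem-compact} tailored to $m\ge k$ and $B\in\cB_e$: for every sequence $(v_N)$ of solutions to \eqref{eq-v-O}-\eqref{bdry-v-O} with $\sup_N\|v_N(0,\cdot)\|_{L^2}<\infty$ and $\|v_{N,+}(\cdot,1)\|_{L^2(-T,0)}\to 0$, extract, up to a subsequence, an $L^2$-limit $v_N(0,\cdot)\to V$ satisfying $V=\cK V$ for some compact operator $\cK$ depending only on $\Sigma$, $S$, and $B$. The characteristic propagation and the cancellations driven by $T>T_{opt}$ on the intervals $(\rho_\ell,\rho_{\ell-1})$ proceed as in \Cref{lem-compact}. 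What changes is the cascade at $x=0$: since $m\ge k$, every $i$ from $1$ up to $k$ must be used, namely $i=m-\ell$ for $\ell\ge m-k+1$ (the sub-system of the last $m-\ell$ boundary equations determines the $m-\ell$ remaining components of $v_{N,-}(t,0)$), and $i=k$ on the earlier intervals $\ell\le m-k$ (and, in the boundary case $m=k$, on the late interval $(\rho_0,0)$ where all $v_+$ components have already decayed, so that the full $k\times k$ submatrix of $B$ must be inverted to extract $v_{N,-}(\cdot,0)$ from the leading terms of the boundary equations). This is precisely the set of inversions packaged in $\cB_e$; well-posedness of the evolution on $[-T,0]$ then transfers the $L^2$ compactness at $t=-T$ produced by this cascade to compactness at $t=0$.

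The remainder of the proof is then parallel to that of \Cref{thm2}. I would set $Y_T:=\{V\in[L^2(0,1)]^n : V=\lim_N v_N(0,\cdot)$ in $L^2$ for some admissible sequence$\}$; the fixed-point identity $V=\cK V$ gives $\dim Y_T<\infty$ uniformly in $T$, time translation gives $Y_{T_2}\subset Y_{T_1}$ for $T_{opt}<T_1<T_2$, and a sequence realising a failure of observability (normalised by $\|v_N(0,\cdot)\|=1$) produces $Y_T\neq\{0\}$. Picking $T_{opt}<T_1<T_2<T$ with $\dim Y_{T_1}=\dim Y_{T_2}\neq 0$ and running the same differentiation-in-time argument as in \Cref{thm2}, the operator $\cA:=\Sigma\partial_x+\Sigma'$ preserves $Y_{T_1}$ and an eigenvector $\cA V=\lambda V$ with $V\neq 0$ exists. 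Setting $v(t,x)=e^{\lambda t}V(x)$, the end-game is identical to that of \Cref{thm2}: $V_-(1)=0$ and the characteristic method force $V_-\equiv 0$; the reduced boundary condition $\Sigma_+(0)V_+(0)=\int_0^1 S_{++}\tr(x)V_+(x)\,dx$ together with the strict triangular structure of $S_{++}$ cascades $V_{k+1}\equiv V_{k+2}\equiv\cdots\equiv V_{k+m}\equiv 0$, contradicting $V\neq 0$. The main obstacle is precisely the compactness lemma in the new regime: arranging the cascade at $x=0$ so that every dimension $i\in\{1,\dots,k\}$ of $\cB_e$ appears at some step, and in particular handling the boundary case $m=k$ on the late interval $(\rho_0,0)$, where the absence of any $v_+$-driven constraint forces the full $k\times k$ inversion of $B$ that distinguishes $\cB_e$ from the weaker class $\cB$ used for null-controllability.
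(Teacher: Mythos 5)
Your overall architecture is the paper's: HUM reduces exact controllability to the observability inequality with $\|v(0,\cdot)\|_{L^2(0,1)}^2$ on the right-hand side, a compactness lemma produces finite-dimensional spaces $Y_T^e$ of limits of $v_N(0,\cdot)$, and the eigenvector/unique-continuation endgame (using $v_-(\cdot,1)=0$, the characteristic method, and the strict triangular structure of $S_{++}$) is exactly the paper's and is correct. The \emph{statement} of the compactness lemma you isolate is also the right one, and your observation that every index $i\in\{1,\dots,k\}$ of the condition defining $\cB_e$ must be used is accurate.

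The gap is in your proof of that compactness lemma, which is the crux of the theorem. You run the cascade on the intervals $(\rho_\ell,\rho_{\ell-1})$ with $\rho_\ell=-(T-\tau_{k+\ell+1})$, i.e., near $t=-T$, and then assert that ``well-posedness of the evolution on $[-T,0]$ transfers the $L^2$ compactness at $t=-T$ to compactness at $t=0$.'' No such transfer is available. The adjoint system \eqref{eq-v-O}--\eqref{bdry-v-O} is well posed only \emph{backward} in time (data at $t=0$, with $k$ boundary conditions at $x=1$ and $m$ at $x=0$, matching the characteristics entering the domain as $t$ decreases); forward in time these boundary conditions sit on the wrong characteristics, and in fact $v(-T,\cdot)$ carries no information about $v_-(0,\cdot)$: by the characteristic method $v_j(t,\cdot)\equiv 0$ for $t\le-\tau_j$, so $v_-(-T,\cdot)\equiv 0$ for \emph{every} solution once $T\ge\tau_k$, while $v_-(0,\cdot)$ is essentially arbitrary. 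Hence precompactness of $\{v_N(-T,\cdot)\}$ says nothing about $\{v_N(0,\cdot)\}$, which is the quantity controlled by the observability inequality here. The cascade must instead be run near the \emph{final} time: since $\|v_{N,+}(\cdot,1)\|_{L^2}\to 0$ and $T\ge T_{opt}\ge\tau_{k+1}$ force $v_{N,+}(0,\cdot)\to 0$, the whole content of $v_N(0,\cdot)$ in the limit is $v_{N,-}(0,\cdot)$, which is encoded by the traces $v_{N,j}(\cdot,0)$ on $(-\tau_j,0)$ for $1\le j\le k$. Decomposing $(-\tau_k,0)$ into the intervals $(-\tau_j,-\tau_{j-1})$, on which $v_{N,1},\dots,v_{N,j-1}$ vanish identically (they have already exited through $x=1$), one solves the last $k-j+1$ boundary equations at $x=0$ for $(v_{N,j},\dots,v_{N,k})(t,0)$ using the invertibility of the $(k-j+1)\times(k-j+1)$ lower-right minor of $B$; letting $j$ run from $1$ to $k$ is exactly where $\cB_e$ enters, and the resulting coupled integral equations (Volterra in $v_{N,-}(\cdot,0)$ toward decreasing $t$, but coupled to $v_{N,+}(\cdot,0)$ through integrals over $s>t$) yield $V=\cK_e V$ with $\cK_e$ compact. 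Your parenthetical about a ``late interval'' and the full $k\times k$ inversion when $m=k$ gestures at this, but as written the main mechanism of your lemma does not close, so the proof is incomplete at its central step.
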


As a consequence of \Cref{lem-FT}, by the Hilbert uniqueness principle, see, e.g., \cite[Chapter 2]{Coron09}, we have

\begin{lemma}\label{lem-observability-e} Let $T>0$. System \eqref{eq-u}-\eqref{control-u} is exactly controllable at the time $T$ if and only if, for some positive constant $C$,
\begin{equation}\label{observability-e}
\int_{-T}^0 |v_+(t, 1)|^2 \, dt \ge C \int_0^1 |v(0, x)|^2 \, dx \;  \forall \,   v \in [L^2(0, 1)]^n,
\end{equation}
for all solution $v(\cdot, \cdot)$ of \eqref{eq-v-O}-\eqref{bdry-v-O}.
\end{lemma}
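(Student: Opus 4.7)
The plan is to derive the equivalence by the Hilbert Uniqueness Method, reducing exact controllability at time $T$ to a coercivity estimate on the adjoint operator $\cF_T^*$ whose explicit form is already given by \Cref{lem-FT}.

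First I would reformulate exact controllability as surjectivity of $\cF_T$. For any $u_0 \in [L^2(0,1)]^n$ and $U \in [L^2(0,T)]^m$, by linearity of \eqref{eq-u}--\eqref{control-u}, the broad solution at time $T$ splits as $u(T, \cdot) = z(T, \cdot) + \cF_T U$, where $z$ is the solution with initial datum $u_0$ and zero control. Thus exact controllability at $T$ amounts to: for every $u_0, u_T \in [L^2(0,1)]^n$ there exists $U$ with $\cF_T U = u_T - z(T, \cdot)$. As $(u_0, u_T)$ varies, $u_T - z(T, \cdot)$ sweeps out all of $[L^2(0,1)]^n$, so exact controllability at $T$ is equivalent to $\cF_T : [L^2(0,T)]^m \to [L^2(0,1)]^n$ being surjective.

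Next I would invoke the standard Hilbert space duality (closed range theorem for a bounded linear map between Hilbert spaces, identified with their duals): $\cF_T$ is surjective if and only if its adjoint is bounded below, i.e.\ there exists $C > 0$ such that
\begin{equation*}
\|\cF_T^* v\|_{[L^2(0,T)]^m}^2 \ge C \|v\|_{[L^2(0,1)]^n}^2 \quad \text{for all } v \in [L^2(0,1)]^n.
\end{equation*}

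Finally, I would convert this coercivity estimate into the form \eqref{observability-e} using \Cref{lem-FT} together with the time-translation invariance of the adjoint problem. By \Cref{lem-FT}, $\cF_T^* v = \Sigma_+(1) v_+(\cdot, 1)$ on $(0, T)$, where $v(\cdot, \cdot)$ solves \eqref{eq-v}--\eqref{initial-v} with terminal datum $v$ at $t = T$. Since $\Sigma_+(1) = \mathrm{diag}(\lambda_{k+1}(1), \dots, \lambda_{k+m}(1))$ is positive definite and bounded, by \eqref{relation-lambda} and \eqref{cond-lambda}, the quantity $|\Sigma_+(1) v_+(t, 1)|^2$ is equivalent to $|v_+(t, 1)|^2$ up to constants depending only on $\Sigma$. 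Moreover, since \eqref{eq-v}--\eqref{bdry-v} has time-independent coefficients, the substitution $t \mapsto t - T$ sends any solution on $(-\infty, T) \times (0, 1)$ with terminal datum at $t = T$ to a solution of \eqref{eq-v-O}--\eqref{bdry-v-O} with terminal datum at $t = 0$, and maps the observation interval $(0, T)$ onto $(-T, 0)$. After absorbing the above constants into $C$, the adjoint lower bound takes exactly the form \eqref{observability-e}. I do not expect any real obstacle here: the argument is the standard HUM duality in the Hilbert setting, and the only points to be checked are the well-posedness of broad solutions (already used in \Cref{lem-FT}) and the harmless rescaling by the invertible constant matrix $\Sigma_+(1)$.
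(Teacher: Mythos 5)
Your proposal is correct and follows essentially the same route as the paper, which proves this lemma simply by invoking \Cref{lem-FT} together with the Hilbert uniqueness method (citing \cite[Chapter 2]{Coron09}); you have merely written out the standard details the paper leaves implicit, namely the reduction of exact controllability to surjectivity of $\cF_T$, the equivalence of surjectivity with the lower bound on $\cF_T^*$, the time translation from $(0,T)$ to $(-T,0)$, and the absorption of the invertible diagonal factor $\Sigma_+(1)$ into the constant. No gaps.
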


As a variant of \Cref{lem-compact}, we establish
\begin{lemma}\label{lem-compact-e} Let $m \ge k \ge 1$, $B \in \cB_e$,  and $T \ge T_{opt}$. Assume that
$(v_N)$ be a sequence of solutions of \eqref{eq-v-O}-\eqref{bdry-v-O} such that
\begin{equation}\label{thm4-generating-eq}
\sup_{N} \|v_N (0, \cdot) \|_{L^2(0, 1)} < + \infty, \mbox{ and }
 \lim_{N \to + \infty} \|v_{N, +}(\cdot, 1) \|_{L^2(0, T)} = 0.
\end{equation}
We have, up to a subsequence,
\begin{equation}\label{thm4-claim1}
v_N(0, \cdot) \mbox{ converges in } L^2(0, 1),
\end{equation}
and the limit $V \in [L^2(0, 1)]^n$ satisfies the equation
\begin{equation}\label{thm4-claim2}
V =  {\mathcal K}_e V,
\end{equation}
for some compact operator $\mathcal{K}_e$ from  $[L^2(0, 1)]^n$ into itself. Moreover, $\cK_e$ depends only  on $\Sigma$, $S$, and $B$; in particular, $\cK_e$ is independent of $T$.
\end{lemma}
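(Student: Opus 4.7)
The plan is to mirror the proof of \Cref{lem-compact}, with indexing adapted to the regime $m \ge k$ and the reconstruction focused on $v_N(0, \cdot)$ rather than on $v_N(-T, \cdot)$.

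First I would dispose of the $v_+$-part of $v_N(0, \cdot)$ by the characteristic method alone: for each $j \in \{1, \dots, m\}$, the backward characteristic of $v_{+, j}$ starting at $(0, x)$ hits $x = 1$ at some $t_0 \in (-\tau_{k+j}, 0) \subset (-T, 0)$, because $T \ge T_{opt} \ge \tau_{k+1} \ge \tau_{k+j}$. Integrating the scalar linear ODE along the characteristic and performing a change of variables gives $\|v_{N, +, j}(0, \cdot)\|_{L^2(0, 1)} \le C \|v_{N, +, j}(\cdot, 1)\|_{L^2(-T, 0)} \to 0$. So the $v_+$-part of any $L^2$-limit $V$ vanishes, and it remains to identify the $v_-$-part.

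By the characteristic formula, recovering $v_{N, -}(0, \cdot)$ reduces to controlling the boundary traces $v_{N, -, i}(\cdot, 0)$ on $(-\tau_i, 0)$ for $i = 1, \dots, k$; note that $v_{-, i}(t, 0) = 0$ for $t \le -\tau_i$ automatically, because the linear scalar ODE along the characteristic from $(t, 0)$ to $(t + \tau_i, 1)$ stays inside the time window $(-T, 0)$ and $v_-$ vanishes on $\{x = 1\}$. I would then iterate backward across the nested intervals $(-\tau_\ell, -\tau_{\ell - 1})$ for $\ell = 1, \dots, k$ (with $\tau_0 := 0$). On $(-\tau_\ell, -\tau_{\ell - 1})$ the traces $v_{-, i'}(\cdot, 0)$ vanish for $i' < \ell$, leaving $k - \ell + 1$ unknowns $v_{-, \ell}, \dots, v_{-, k}$ at each time; meanwhile $T \ge T_{opt} \ge \tau_\ell + \tau_{m + \ell}$ together with $\tau_{k + j} \le \tau_{m + \ell}$ for $j \ge m - k + \ell$ forces $v_{N, +, j}(\cdot, 0) \to 0$ on this interval for exactly these $k - \ell + 1$ indices $j$. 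The matching last $k - \ell + 1$ equations of \eqref{bdry-v-O} then form an algebraic system in the $k - \ell + 1$ unknowns whose coefficient matrix coincides (after accounting for the $\Sigma_\pm(0)$ factors) with the $(k - \ell + 1) \times (k - \ell + 1)$ submatrix of $B$ formed by its last $k - \ell + 1$ rows and columns; this submatrix is invertible by $B \in \cB_e$ applied at index $k - \ell + 1$.

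To close the argument I would rewrite the integral term $I_{N, j}(t) = \int_0^1 (S^\mathsf{T} v_N)_j(t, x)\,dx$ by plugging in the characteristic representations of $v_{N, -}(t, x)$ (in terms of the unknown traces $v_{N, -, i'}(\cdot, 0)$) and of $v_{N, +}(t, x)$ (a vanishing contribution from $v_{N, +}(\cdot, 1) \to 0$). The resulting system on $(-\tau_k, 0)$ is a Volterra-Fredholm equation for the traces with a compact kernel depending only on $\Sigma, S, B$, and the algebraic inversion above turns it into an equation of the form $(\mathrm{Id} - \cK_1)(v_{N, -}(\cdot, 0)) = o(1)$ with $\cK_1$ compact. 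Composing with the characteristic propagation from the traces back to $v_N(0, \cdot)$ produces the compact operator $\cK_e$ on $[L^2(0, 1)]^n$, and any $L^2$-limit $V$ then satisfies $V = \cK_e V$.

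The main obstacle is coordinating the interval partition with the submatrix invertibility at each level $k - \ell + 1$; this directly parallels the iterative construction of $G_\ell, H_\ell, L_\ell, M_\ell, Q_\ell$ in the proof of \Cref{lem-compact}, but with the $v_+$-$v_-$ indexing swapped to reflect the $m \ge k$ setting, and with $\cB_e$ providing one extra layer of invertibility (at $i = k$) precisely to accommodate the $k$ unknowns on the first interval $(-\tau_1, 0)$. Checking that the assembled Volterra-Fredholm kernel is compact on $L^2$ and independent of $T$ is then routine.
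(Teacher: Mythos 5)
Your overall strategy is exactly the paper's: pass to the traces at $x=0$, kill $v_{N,+}(0,\cdot)$ by the characteristic method, use the vanishing of $v_{N,i}(\cdot,0)$ for $t<-\tau_i$ to reduce to $k-\ell+1$ negative unknowns on each interval $(-\tau_\ell,-\tau_{\ell-1})$, and invert the last $k-\ell+1$ rows of the boundary condition using \eqref{cond-B-1} with $i=k-\ell+1$. Your bookkeeping of which components of $v_{N,+}(\cdot,0)$ vanish on which interval (namely $j\ge m-k+\ell$, using $T\ge\tau_\ell+\tau_{m+\ell}$) is also correct and matches the paper's choice of parameters.

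There is, however, one genuine error in your closing step: you treat the $v_{N,+}$-part of the integral term, $\int_0^1 S_{++}^\mathsf{T}(x)v_{N,+}(t,x)\,dx$, as ``a vanishing contribution.'' It is not. On $(-\tau_\ell,-\tau_{\ell-1})$ only the components $v_{N,k+j}$ with $j\ge m-k+\ell$ tend to zero; the components $v_{N,k+1},\dots,v_{N,m+\ell-1}$ remain genuine unknowns there. For instance, with $k=1$ and $m=2$ one always has $T_{opt}<\tau_1+\tau_2$, so for $T$ close to $T_{opt}$ the trace $v_{N,2}(\cdot,0)=v_{N,k+1}(\cdot,0)$ need not vanish on $(-\tau_1,0)$; and since $S_{++}^\mathsf{T}$ is strictly lower triangular, its last rows hit precisely these low-index positive components. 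Consequently your equation $(\mathrm{Id}-\cK_1)(v_{N,-}(\cdot,0))=o(1)$ is not closed: its right-hand side still contains unknown positive traces. The repair is the one the paper carries out (and which you only allude to in your final paragraph through the operators $Q_\ell,M_\ell$): on each interval you must also take the \emph{first} $m-k+\ell-1$ equations of \eqref{bdry-v-O}, solve the resulting Volterra system for $W^e_{N,\ell}=(v_{N,k+1},\dots,v_{N,m+\ell-1})^\mathsf{T}(\cdot,0)$ in terms of $V^e_{N,\ell}(\cdot,0)$ (the Volterra part is invertible, and the non-compact pointwise term $Q^e_\ell V^e_{N,\ell}$ is harmless because $W^e_{N,\ell}$ re-enters the $V^e$-equation only under an integral in time), and substitute back. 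With that supplement the argument closes as you describe; extracting the convergent subsequence is then the routine weak-limit-plus-compactness upgrade.
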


\begin{proof} Since  $\lim_{N \to + \infty} \|v_{N, +}(\cdot, 1) \|_{L^2(0, T)} = 0$ and $T \ge T_{opt}$, it follows from the characteristic method that, for $t \in (-T + \tau_{k+1}, 0]$,
\begin{equation}\label{thm4-p1}
\| v_{N, +} (t, \cdot)\|_{L^2(0, 1)} \to 0 \mbox{ as } N \to + \infty.
\end{equation}
By the characteristic method, we also have, for $1 \le j  \le k$,
\begin{equation}\label{thm4-p2}
\| v_{N, j} (t, \cdot)\|_{L^2(0, 1)} = 0 \mbox{ for } t \in (-T, -\tau_j).
\end{equation}

Recall that, for $t \le 0$,
\begin{equation}\label{thm4-bdry}
\Sigma_+ (0) v_{N, +}(t, 0) = - B\tr \Sigma_- (0) v_{N, -} (t, 0)  + \int_0^1  S_{-+}\tr(x) v_{N, -} (t, x) + S_{++}\tr(x) v_{N, +}(t, x) \, dx.
\end{equation}
Denote, for $1 \le j \le k$,
$$
V^e_{N, j} = (v_{N, j}, \cdots, v_{N, k})\tr,
$$
$$W^e_{N, j} = (v_{N, k+1}, \cdots, v_{N, m+ j -1})\tr,
$$
and set, for $1 \le j \le k$,
$$
 \hcD_{j}^e:= \Big\{ (t, s): t \in (-\tau_j, -\tau_{j-1});  t \le s \le 0 \Big\},
$$
and
$$
 \cD_{j}^e:= \Big\{ (t, s): t \in (-\tau_j, -\tau_{j-1});  -\tau_k \le s \le t \Big\},
$$
with the convention $\tau_{0} = 0$. When $m = k$ and $j=1$, $W_{N, 1}^e$ is irrelevant.

For $1 \le j \le k$,  consider $t \in (-\tau_j, - \tau_{j-1})$ and $x = 0$. First, consider the last $(k- j + 1)$ equations of  \eqref{thm4-bdry} for $t \in (-\tau_j, - \tau_{j-1})$.  Using \eqref{thm4-p2} and  \eqref{cond-B-1} with $i= k - j + 1$, and viewing $v_{N, \ell}(t, \cdot)$ for $x \in (0, 1)$ and $v_{N, \ell}(\cdot, 0)$ for $t \in (-\tau_{j},  - \tau_{j-1})$  for  $m + j \le \ell \le k + m$ as parameters,  we have, for $t \in (-\tau_j, -\tau_{j-1})$,
\begin{equation}\label{thm4-p3}
V^e_{N, j} (t, 0) = \int_{-\tau_k}^{t} G_j^e (t, s) V^e_{N, j} (s, 0) \, ds
+  \int_{t}^0 H_j^e(t, s) W^e_{N, j} (s, 0) \, ds + F^e_{N, j}(t),
\end{equation}
for some $G_j^e \in [L^\infty(\hcD_j^e)]^{(k-j+1) \times (k- j + 1)}$  and  $H_j^e \in [L^\infty(\cD_j^e)]^{(k-j+1) \times (m-k+j-1)}$ and
which depends only on $\Sigma$, $S$, and $B$, and for some
$F^e_{N, j}\in [L^2(-\tau_j, -\tau_{j-1})]^{k-j+1}$, which depends only on $\Sigma$, $S$, and $B$, and $v_{N, +}$. Moreover, by \eqref{thm4-p1} and \eqref{thm4-p2},
\begin{equation}\label{thm4-p4}
F^e_{N, j} \to 0 \mbox{ in } L^2(-\tau_j, -\tau_{j-1}) \mbox{ as } N \to + \infty.
\end{equation}
When $k = m$ and $j=1$, the second term in the RHS of \eqref{thm4-p3} is understood by $0$.

Next, consider the first $(m-k + j -1)$ equations of  \eqref{thm4-bdry} for $t \in (-\tau_j, \tau_{j-1})$. We have
\begin{equation}\label{thm4-p4-new}
W_{N, j}^e (t, 0) = Q_{j}^e V_{N, j}^e (t, 0)
+ \int_{-\tau_k}^t  L_{j}^e (t, s) V^e_{N, j} (s, 0) \, ds   + \int_t^{0}
M_{\ell}^e (t, s) W_{N, j}^e (s, 0) \, ds.
\end{equation}
for some constant $Q_j^e  \in \mR^{(m-k + j -1) \times (k-j+1)}$,  for some  $ L_{j}^e \in [L^\infty( \cD_j^e)]^{(m-k + j -1) \times (k - j +1)}$, and for some $M_j^e \in  [L^\infty(\hcD_j)]^{(m-k + j - 1) \times (m-k+ j -1)}$, all  depending only on $\Sigma$, $B$, and $S$. When $k = m$ and $j=1$,  \eqref{thm4-p4-new} is irrelevant.

Using \eqref{thm4-p4} with $j=1$, one can solve $W_{N, 1}^e$ as a function of $V_{N, 1}^e$ and $F_{N, 1}^e$ (if $m = k$, then this  is irrelevant). Continue the process with $j=2$, then $j=3$, \dots, finally with $j = k$. Noting that
$$
v_{N, m+ j -1}(\cdot, 0) \to 0 \mbox{ in } L^2(\tau_{j-1}, 0),
$$
considering it as a parameters, and using \eqref{thm4-p4},  one can solve
\begin{equation*}
W_{N, k}^e \in [L^2(-\tau_k, 0)]^{m-k} \times L^2(-\tau_k, - \tau_1)
\times \dots \times L^2(-\tau_k, -\tau_{k-1})
\end{equation*}
as a function of $V_{N, k}^e \in L^2(-\tau_1, 0) \times \dots \times L^2(-\tau_k, 0)$,
and $F_{N, j}^e$ with $j = 1, \dots, k$,  and one has
$$
V_{N, k}^e = \cK_1^e W_{N, k}^e + g_{N}^e.
$$
where $g_N^e  \in L^2(-\tau_1, 0) \times \dots \times L^2(-\tau_k, 0)$ converges to 0 in the corresponding $L^2$-norm and $\cK_1^e$ is a compact operator depending only on $\Sigma$, $S$ and $B$. The conclusion now follows from \eqref{thm4-p3} after noting that the information of $v_{N, -}(0, \cdot )$ is encoded by the information of $v_{N, 1}(\cdot, 0)$ on $(-\tau_1, 0)$, of $v_{N, 2}(\cdot, 0)$ on $(-\tau_2, 0)$,  \dots, of $v_{N, k}(\cdot, 0)$ on $(-\tau_k,  0)$, by the characteristic method.
\end{proof}

We are ready to give the

\begin{proof}[Proof of \Cref{thm4}]  The proof of \Cref{thm4} is similar to the one \Cref{thm3}.  For $T> T_{opt}$, set
\begin{multline}\label{thm4-def-YT}
Y_T^e := \Big\{ V \in L^2(0, 1): V \mbox{ is the limit in $L^2(0, 1)$ of some subsequence of solutions $\big(v_N(0, \cdot) \big)$} \\[6pt]
\mbox{of  \eqref{eq-v-O}-\eqref{bdry-v-O} such that
 \eqref{thm4-generating-eq} holds}\Big\}.
\end{multline}
As in Theorem~\ref{thm2},  $Y^e_T$ is a vectorial space of finite dimension and  there exist $T_{opt} < T_1 < T_2 < T$ such that
$$
\dim Y_{T_1}^e = \dim Y_{T_2}^e.
$$
Fix such $T_1$ and $T_2$. By \Cref{lem-observability-e}, it suffices to  prove \eqref{observability-e} by contradiction. Assume that \eqref{observability-e} does not hold. Then, as in the proof Theorem~\ref{thm2},   there exist $\lambda \in \mC$ and $V \in Y_{T_1}^e \setminus \{0 \}$ such that
\begin{equation*}
 \Sigma \partial_x V + \Sigma' V = \lambda V.
\end{equation*}
Set
\begin{equation}\label{def-v-Thm4}
v(t, x) = e^{\lambda t} V(x) \mbox{ in } (-\infty, 0) \times (0, 1).
\end{equation}
As in the proof of \Cref{thm2}, one can verify that $v(\cdot, \cdot)$ satisfies \eqref{eq-v-O}-\eqref{bdry-v-O}.  Applying the characteristic method, one deduce that
\begin{equation}\label{thm2-A1-new}
v_- (t, \cdot) = 0 \mbox{ for } t < -\tau_k.
\end{equation}
As in the proof of \Cref{thm2}, we also have
\begin{equation}\label{thm4-cl}
\mbox{$v(t, \cdot ) =  0 $ in $(0, 1)$ for $t < - \tau_k - \tau_{k+1} - \cdots - \tau_{k+m} $}.
\end{equation}
It follows that $V = 0$ which contradicts the fact $V \neq 0$.
Thus \eqref{observability-e} holds and the exact-controllability  is valid for $T > T_{opt}$.
\end{proof}

\begin{remark} \rm \Cref{thm3} can be also deduced from \Cref{thm1}. Indeed, consider first the case $m  = k$. By making a change of variables
$$
\tw (t, x) = w(T -t, x) \mbox{ for } t \in (0, T), \, x \in (0, 1).
$$
Then
$$
\tw_-(t, 0) = \widetilde B^{-1} \tw_{+}(t, 0),
$$
with $\tw_-(t, \cdot)= (w_{2k}, \dots, w_{k+1})\tr (T-t, \cdot)$, and  $\tw_+ (t, \cdot) = (w_k, \dots, w_1)\tr (T -t, \cdot)$, and $\widetilde B_{ij} = B_{pq}$ with $p = k-i$ and $q = k-j$. Note that the $i \times i$  matrix formed from the first $i$ columns and  rows of $\widetilde B$  is invertible.
Using Gaussian elimination method, one can find  $(k \times k)$ matrices  $T_1, \dots, T_N$ such that
$$
T_N \dots T_1 \widetilde B = U,
$$
where $U$ is a $(k \times k)$ upper triangular matrix, and $T_i$ ($1 \le i \le N$) is the matrix given by  the operation which replaces a row $p$ by itself plus a multiple of a row $q$ for some $1\le q<p \le N$. It follows that
$$
\widetilde B^{-1} = U^{-1} T_N \dots T_1.
$$
One can check that $U^{-1}$ is an invertible,  upper triangular matrix and $T_N \dots T_1$ is an invertible,  lower triangular matrix. It follows that the
$i \times i$  matrix formed from the last $i$ columns and  rows of $\widetilde B^{-1}$ is the product of the matrix formed from the last $i$ columns and  rows of $U^{-1}$
and the matrix formed from the last $i$ columns and  rows of $T_N \dots T_1$. Therefore,  $\widetilde B^{-1} \in {\cal B}$. One can also check  that the exact controllability of the system for $w(\cdot, \cdot)$ at the time $T$ is equivalent to the null-controllability  of the system for $\tw(\cdot, \cdot)$ at the same time and the conclusion of \Cref{thm3} follows from \Cref{thm1}. The case $m>k$ can be obtained from the case $m = k$ as follows. Consider $\hw (\cdot, \cdot)$ the solution of the system
$$
\partial_t \hw = \hat \Sigma(x) \partial_x \hw (t, x) + \hat C(x) \hw (t, x),
$$
$$
\hw_-(t, 0) = \hat B \hw_+ (t, 0),  \quad \mbox{ and } \quad \hw_+(t, 1) \mbox{ are controls}.
$$
Here
$$
\hat \Sigma = \mbox{diag} (- \hat \lambda_1, \dots, -\hat \lambda_m, \hat \lambda_{m+1}, \dots \hat \lambda_{2m}),
$$
with $\hat \lambda_j =  - (1 + m -k - j ) \eps^{-1}$  for $1 \le j \le m-k$ with positive small $\eps$, $\hat \lambda_{j} = \lambda_{j - (m-k)}$ if $ m-k + 1 \le j \le m$, and $\hat \lambda_{j+m} = \lambda_{j + k}$ for $1 \le j \le m$,
$$
\hat C(x) = \left(\begin{array}{cc} 0_{m-k, m-k} & 0_{m-k, n}\\[6pt]
0_{n, m-k} & C (x)
\end{array}\right),
$$
and
$$
\hat B = \left(\begin{array}{cc} I_{m-k} & 0_{m-k, m}\\[6pt]
0_{m-k, m} & B
\end{array}\right),
$$
where $I_{\ell}$ denotes the identity matrix of size $\ell \times \ell$ for $\ell \ge 1$. Recall that $0_{i, j}$ denotes the zero matrix of size $i \times j$ for $i, j, \ell \ge 1$. Then the exact controllability of $w$ at the time $T$ can be derived from the exact controllability of $\hw$ at the same time. One then can deduce the conclusion of \Cref{thm3} from the case $m=k$ using \Cref{thm1} by noting that the optimal time for the system of $\hw$ converges to the optimal time for the system of  $w$ as $\eps \to 0_+$.
\end{remark}

\textbf{Acknowledgments.} The authors are partially supported by  ANR Finite4SoS ANR-15-CE23-0007.

\providecommand{\bysame}{\leavevmode\hbox to3em{\hrulefill}\thinspace}
\providecommand{\MR}{\relax\ifhmode\unskip\space\fi MR }
\providecommand{\MRhref}[2]{%
  \href{http://www.ams.org/mathscinet-getitem?mr=#1}{#2}
}
\providecommand{\href}[2]{#2}

\end{document}